\newcommand{\Fin}{\mbox{Fin}(\nats)}
\newcommand{\R}{\mathbb{R}}
\newcommand{\N}{\mathbb{N}}
\newcommand{\Z}{\mathbb{Z}}
\newcommand{\larr}{\left( \begin{array}{c}}
\newcommand{\rarr}{\end{array} \right) }
\newcommand{\lsqarr}{\left[ \begin{array}{c}}
\newcommand{\rsqarr}{\end{array} \right]}
\def\Ddots{\mathinner{\mkern1mu\raise\p@
\vbox{\kern7\p@\hbox{.}}\mkern2mu
\raise4\p@\hbox{.}\mkern2mu\raise7\p@\hbox{.}\mkern1mu}}
\newcommand{\nats}{{\mathbb N}}
\newcommand{\ints}{{\mathbb Z}}
\newtheorem{thm}{Theorem}
\newtheorem{cor}{Corollary}
\newtheorem{theorem}{Theorem}[section]
\newtheorem{lemma}[theorem]{Lemma}
\newtheorem{corollary}[theorem]{Corollary}
\newtheorem{proposition}[theorem]{Proposition}
\newtheorem{remark}[theorem]{Remark}
\newtheorem{definition}[theorem]{Definition}
\theoremstyle{definition}
\begin{document}

\title[Central sets and substitutive dynamical systems]{Central sets and substitutive dynamical systems}

\author[M. Barge]{Marcy Barge}
\address{Department of Mathematics\\
Montana State University\\
Bozeman, MT 59717-0240, USA.}
\email{barge@math.montana.edu}

\author[L.Q. Zamboni]{Luca Q. Zamboni}
\address{Universit\'e de Lyon\\
Universit\'e Lyon 1\\
CNRS UMR 5208\\
Institut Camille Jordan\\
43 boulevard du 11 novembre 1918\\
F69622 Villeurbanne Cedex, France. Also,
Department of Mathematics\\
FUNDIM\\
University of Turku\\
FIN-20014 Turku, Finland.}
\email{lupastis@gmail.com}

\keywords{Sturmian words, abstract numeration systems, IP-sets, central sets and the Stone-\v Cech compactification.}
\subjclass[2000]{Primary 68R15 \& 05D10}
\date{April 30, 2012}

\maketitle

\begin{abstract}
In this paper we establish a new connection between central sets and the  {\it strong coincidence conjecture} for fixed points of irreducible primitive substitutions of Pisot type. Central sets, first introduced by  Furstenberg using notions from topological dynamics, constitute a special class of subsets of $\nats$ possessing strong combinatorial properties: Each central set contains arbitrarily long arithmetic progressions, and solutions to all partition regular systems of homogeneous
linear equations. We  give an equivalent reformulation of the strong coincidence condition in terms of central sets and minimal idempotent ultrafilters in the Stone-\v Cech compactification $\beta \nats .$
This provides a new arithmetical approach to an outstanding conjecture in tiling theory, the {\it Pisot substitution conjecture.}  The results in this paper rely on  interactions between different areas of mathematics, some of which had not previously been directly linked: They include the general theory of combinatorics on words,  abstract numeration systems, tilings, topological dynamics and the algebraic/topological properties of  Stone-\v Cech compactification of $\nats.$
\end{abstract}

\section{Introduction}

An important open problem in the theory of substitutions is the so-called {\it strong coincidence conjecture}: It states that each pair of fixed points $x$ and $y$ of an irreducible primitive substitution of Pisot type are {\it strongly coincident}: There exist a letter $a$ and a pair of Abelian equivalent words $s,t,$  such that $sa$ is a prefix of $x$ and $ta$ is a prefix of $y.$
This combinatorial condition, originally due to P. Arnoux and S. Ito, is an extension of a similar condition considered by F.M. Dekking in \cite{Dek} in the case of uniform substitutions. In this case Dekking proves that the condition is satisfied by the ``pure base" of the substitution if and only if  the associated substitutive subshift has {\it pure discrete spectrum}, i.e., is metrically isomorphic with translation on a compact Abelian group. The strong coincidence conjecture has been verified for irreducible primitive substitutions of Pisot type on a binary alphabet in \cite{BD} and is otherwise still open.

The strong coincidence conjecture is linked to diffraction properties of one-dimensional atomic arrangements in the following way. It is shown in \cite{Dw} and \cite{LMS} that
an atomic arrangement determined by a substitution has pure point diffraction spectrum
(i.e., is a perfect quasicrystal) if and only if the tiling system associated with the substitution has pure discrete dynamical spectrum. The {\em Pisot substitution conjecture} asserts that the dynamical spectrum of the tiling system associated with an irreducible Pisot substitution has pure discrete dynamical spectrum. For the latter to hold, it is necessary, and conjecturally sufficient, for the substitution to satisfy the strong coincidence condition.

In this paper we establish a link between
the strong coincidence conjecture and central sets, originally introduced by Furstenberg in \cite{F}. More precisely, we obtain an equivalent reformulation of the conjecture in terms of minimal idempotents in the Stone-\v Cech compactification $\beta \nats .$

Let $\nats =\{0,1,2,3,\ldots\}$ denote the set of natural numbers, and $\Fin$ the set of all non-empty finite subsets of $\nats.$ A subset $A$ of $\nats$ is called an {\it IP-set} if $A$ contains $\{\sum _{n\in F}x_n \,| \,F\in \Fin\}$ for some infinite sequence of natural numbers $x_0<x_1<x_2 \cdots .$ A subset $A\subseteq \nats$  is called an {\it IP$^*$-set} if $A\cap B\neq \emptyset $ for every IP-set $B\subseteq \nats.$
 In \cite{F}, Furstenberg introduced a special class of IP-sets, called central sets, having a substantial combinatorial structure.  Central sets were originally defined in terms of  topological dynamics:

\begin{definition}\label{Cen2} A subset $A\subset \nats$ is called {\it central} if there exists a compact metric space $(X,d)$ and a continuous map $T: X\rightarrow X,$ points $x,y \in X$ and a neighborhood $U$ of $y$ such that
\begin{itemize}
\item $y$ is a uniformly recurrent point in $X,$
\item $x$ and $y$ are proximal,
\item $A=\{n \in \nats\,|\, T^n(x)\in U\}.$
\end{itemize}
 We say $A\subset \nats$ is {\it central}$^*$ if  $A\cap B\neq \emptyset $ for every central set $B\subseteq \nats.$
\end{definition}

\noindent Recall that $x$ is said to be {\it uniformly recurrent} in $X$ if for every neighborhood $V$ of $x$ the set
$\{n\,|\, T^n(x)\in V\}$ is syndetic, i.e., of bounded gap. Two points $x,y\in X$ are said to be {\it proximal} if for every $\epsilon >0$ there exists $n\in \nats$ such that $d(T^n(x),T^n(y))<\epsilon.$   

It is not evident from the above definition that central sets are IP-sets. 
The connection between the two lies in the algebraic and topological properties of the Stone-\v Cech compactification $\beta \nats .$ We regard $\beta \nats$ as the collection of all ultrafilters on $\nats.$ 
There is a natural extension of the operation of addition $+$ on $\nats$ to $\beta \nats$ making $\beta \nats$ a compact {\it left-topological semigroup.}
Via a celebrated result of Ellis \cite{E}, $\beta \nats$  contains {\it idempotents}, i.e.,  ultrafilters $p\in \beta \nats$  satisfying  $p+p=p.$

A striking result due to Hindman links IP-sets and idempotents in $\beta \nats:$ A subset $A\subseteq \nats$ is an IP-set if and only if $A\in p$ for some idempotent $p\in \beta \nats$ (see Theorem 5.12 in \cite{HS}). Thus $A$ is an IP$^*$-set if and only if $A\in p$ for every idempotent $p\in \beta \nats$ (see Theorem~2.15 in \cite{VB2}). \\
It follows that given any finite partition of an IP-set, at least one element of the partition is again an IP-set. In other words the property of being an IP-set is {\it partition regular}, i.e., cannot be destroyed via a finite partitioning. Other examples of partition regularity are given by the pigeonhole principle, sets having positive upper density, and sets having arbitrarily long arithmetic progressions (Van der Waerden's theorem).

In \cite{BH}, Bergelson and Hindman  showed that central sets  too may alternatively be defined in terms of  a special class of  ultrafilters, called {\it minimal idempotents}. Every compact Hausdorff left-topological semigroup $\mathcal{S}$  admits a smallest two sided ideal $K(\mathcal{S})$ which is at the same time the union of all minimal right ideals of $\mathcal{S}$ and the union of all minimal left ideals of $\mathcal{S}$ (see for instance \cite{HS}).
It is readily verified that the intersection of any minimal left ideal with any minimal right ideal is a group. In
particular, there are idempotents in $K(\mathcal{S}).$ Such idempotents are called minimal and their elements are called central sets, i.e.,  $A \subset \nats$ is a {\it central set} if it is a member of some minimal idempotent in $\beta \nats.$ 

 It now follows that every central set is an IP-set and that the
property of being central is partition regular. Central sets are
known to have substantial combinatorial structure. For example,
any central set contains arbitrarily long arithmetic progressions,
and solutions to all partition regular systems of homogeneous
linear equations (see for example \cite{BHS}).

An ultrafilter may be thought of as a $\{0,1\}$-valued finitely additive probability measure defined on all subsets of $\nats.$ This notion of measure induces a notion of convergence ($p$-$\lim_n)$ for sequences indexed by $\nats,$ which we regard as a mapping  from words to words. This key notion of convergence allows us to reformulate  the strong coincidence conjecture in terms of central sets:

\begin{thm}\label{SCCP} Let $\tau$ be an irreducible primitive substitution of Pisot type. Then for any pair of fixed points $x$ and $y$ of $\tau$ the following are equivalent:
\begin{enumerate}
\item $x$ and $y$ are strongly coincident.
\item There exists a minimal idempotent $p\in \beta \nats$ such that $y=p$-$\lim_n T^n(x)$ where $T$ denotes the shift map.
\item For any prefix $u$ of $y,$ the set of occurrences of  $u$ in $x$  is a central set.
\end{enumerate}
\end{thm}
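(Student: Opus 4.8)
\emph{Plan.} The plan is to reduce everything to two dynamical statements about the (minimal, since $\tau$ is primitive) subshift $X_\tau$. Write $T$ for the shift and $\tilde x\colon\beta\nats\to X_\tau$ for the continuous extension of $n\mapsto T^n(x)$, so that $p\text{-}\lim_n T^n(x)=\tilde x(p)$ and, for a prefix $u$ of $y$, the set of occurrences of $u$ in $x$ is exactly $O_u:=\{n\in\nats:T^n(x)\in[u]\}$, $[u]$ the clopen cylinder. First I would record, via the Auslander--Ellis theorem (see \cite{HS}), that in any system $\{\tilde x(p):p\ \text{a minimal idempotent of}\ \beta\nats\}$ is precisely the set of uniformly recurrent points proximal to $x$; since $X_\tau$ is minimal this set equals the proximal cell $P(x):=\{z\in X_\tau:z\ \text{is proximal to}\ x\}$. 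Hence (2) is equivalent to $y\in P(x)$, i.e. to proximality of $x$ and $y$, and $(2)\Rightarrow(3)$ is immediate: if $y=\tilde x(p)$ with $p$ a minimal idempotent, then each $[u]$ ($u$ a prefix of $y$) is a neighbourhood of $y$, so $O_u\in p$ is central. Thus the theorem reduces to: (a) $x,y$ are strongly coincident $\iff$ $x,y$ are proximal; and (b) hypothesis (3) implies $x,y$ are proximal.

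For (a), the direction ``strongly coincident $\Rightarrow$ proximal'' is the easy one and uses only primitivity: if $sa$ is a prefix of $x$ and $ta$ a prefix of $y$ with $s,t$ Abelian equivalent, then, since $\tau$ intertwines abelianization with the incidence matrix, $\tau^n(s)$ and $\tau^n(t)$ are Abelian equivalent, hence of equal length, for every $n$; as $x=\tau(x)$ and $y=\tau(y)$, the words $x$ and $y$ agree on the block $\tau^n(a)$ beginning at position $|\tau^n(s)|$, and $|\tau^n(a)|\to\infty$, so $d(T^{|\tau^n(s)|}x,T^{|\tau^n(s)|}y)\to0$.

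The crux will be the converse. Put $\delta_\ell:=\vec P_x(\ell)-\vec P_y(\ell)$, where $\vec P_z(\ell)\in\Z^{\A}$ counts the letters of $z_0\cdots z_{\ell-1}$; these lie in the rank-$(|\A|-1)$ lattice $\Lambda=\{v\in\Z^{\A}:\sum_a v_a=0\}$. Step one is that $\{\delta_\ell:\ell\ge0\}$ is \emph{finite}: decompose $\R^{\A}=E^u\oplus E^s$ into the Perron line and the complementary hyperplane invariant under the incidence matrix $M$; the Pisot hypothesis is exactly that $M|_{E^s}$ is a contraction, and the standard self-similar estimate (write each prefix of $x$ as a concatenation of $\tau$-images of a shorter prefix followed by a bounded remainder) shows that the projections $\pi_s\vec P_x(\ell)$ onto $E^s$ along $E^u$ stay in a bounded set --- the Rauzy fractal $\mathcal{R}$ --- and likewise for $y$, so $\{\pi_s\delta_\ell\}$ is bounded; irreducibility forces $E^u$ to contain no nonzero integer vector, so $\pi_s$ is injective on $\Lambda$ with image a lattice in $E^s$, and a bounded subset of a lattice is finite. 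Step two: if $x,y$ are proximal they agree on intervals $[n_k,n_k+L_k)$ with $L_k\to\infty$, so $\delta$ is constant on each and, passing to a subsequence, equals one fixed $v\in\Lambda$ on all of them; writing $w_k$ for the common word read on $[n_k,n_k+L_k)$, both $\pi_s\vec P_x(n_k)+\mathcal{R}_{w_k}$ and $\pi_s\vec P_y(n_k)+\mathcal{R}_{w_k}$ are sets of prefix-projections, hence lie in $\mathcal{R}$, and differ by the fixed vector $\pi_s v$; since $\operatorname{diam}\mathcal{R}_{w_k}\to\operatorname{diam}\mathcal{R}$ (the $w_k$ eventually trace out all of $\mathcal{R}$ up to translation), the parallelogram inequality forces $\pi_s v=0$, hence $v=0$. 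Then $\delta_{n_k}=\delta_{n_k+1}=0$, i.e. $x_{[0,n_k)}$ and $y_{[0,n_k)}$ are Abelian equivalent and $x_{n_k}=y_{n_k}$: strong coincidence.

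Finally, for (b): assuming (3), for every prefix $u$ of $y$ there is a minimal idempotent $p$ with $O_u\in p$, i.e. $\tilde x(p)\in[u]\cap P(x)$, so $y\in\overline{P(x)}$; but $P(x)$ is finite --- it lies in a fibre of the maximal equicontinuous factor map of $X_\tau$, and those fibres are finite because the coincidence rank of an irreducible primitive Pisot substitution is finite --- hence closed, so $y\in P(x)$, which is (2). I expect the main obstacle to be the implication ``proximal $\Rightarrow$ strongly coincident'' in (a): the soft inputs (Auslander--Ellis, finiteness of the coincidence rank) can be quoted, but converting the far-away asymptotic agreement produced by proximality into a genuine common prefix with equal abelianization rests on the Pisot-geometric facts above --- boundedness of the Rauzy fractal and, crucially, injectivity of $\pi_s$ on $\Lambda$, which confines the abelianization discrepancy to finitely many values and thereby pins it to $0$ on the long agreement windows.
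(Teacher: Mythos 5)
Your proposal is correct in outline and shares the paper's skeleton: both reduce the theorem to (i) strong coincidence $\iff$ proximality, (ii) proximality of $x,y$ $\iff$ $y=p$-$\lim_n T^n(x)$ for a minimal idempotent $p$ (the Auslander--Ellis/Hindman--Strauss correspondence between minimal idempotents and the uniformly recurrent proximal cell), and (iii) condition (3) places $y$ in the closure of the proximal cell of $x$, which is finite and hence closed. The divergence is in the two hard ingredients. First, for finiteness of the proximal cell you quote finiteness of the fibres of the maximal equicontinuous factor of the subshift; the paper does not have this off the shelf for the $\Z$-action --- the cited coincidence-rank results (Baker--Barge--Kwapisz, Barge--Kellendonk) are for the $\R$-action on the tiling space, and the paper's Lemmas on the strand $S_x$ and on ``finite differences'' exist precisely to transfer that statement to the shift. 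Your citation is legitimate but hides that transfer; note that your own ``step one'' (finiteness of $\{\delta_\ell\}$) is essentially the finite-differences lemma and could be used to make the transfer explicit. Second, and more interestingly, for ``proximal $\Rightarrow$ strongly coincident'' the paper works in strand space: it shows $S_x\sim_p S_y+t{\bf w}$, applies $\Sigma_{\tau}^{km}$ to expand $t$ and uses finiteness of the proximal cell to force $t=0$, and then invokes the Barge--Kellendonk theorem that proximality equals \emph{strong} proximality in Pisot tiling spaces to obtain a shared segment. Your replacement --- bounded $E^s$-projection of the abelianization discrepancy, discreteness of $\pi_s(\Lambda)$ from irreducibility, constancy of $\delta$ on long agreement windows, and the parallelogram/diameter argument inside the Rauzy fractal to kill the residual translation --- is a more elementary route that avoids the proximal-equals-strongly-proximal input entirely. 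It is sound, but two supporting facts must be stated and justified for it to close: that the Rauzy fractal is the same for both fixed points (standard, via uniqueness of the attractor of the associated graph-directed iterated function system), and that $\operatorname{diam}\mathcal{R}_{w_k}\to\operatorname{diam}\mathcal{R}$, which follows from uniform recurrence since every sufficiently long factor contains any prescribed factor and the diameter of $\mathcal{R}$ is approximated by diameters of factor pieces. With those made explicit, your argument is a valid and arguably cleaner alternative to the paper's treatment of the crux.
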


\noindent Theorem~\ref{SCCP} asserts that for an irreducible primitive substitution of Pisot type, the strong coincidence condition is equivalent to the condition that the idempotent ultrafilters in $\beta \nats$ permute the fixed points of the substitution.  

In the context of uniformly recurrent words, IP-sets and central sets are one and the same (see Theorem~\ref{same} proved in  \cite {BPZ}); thus we obtain:

\begin{cor}\label{scip} Let $\tau$ be an irreducible primitive substitution of Pisot type. Then for any pair of fixed points $x$ and $y$ of $\tau,$ $x$ and $y$ are strongly coincident if and only if for any prefix $u$ of $y,$ the set of occurrences of  $u$ in $x$  is an IP-set.
\end{cor}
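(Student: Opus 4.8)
The plan is to read off Corollary~\ref{scip} from Theorem~\ref{SCCP} and Theorem~\ref{same}, with essentially no work beyond a short implication chase. First I would record the standard fact that, $\tau$ being primitive, every fixed point of $\tau$ is uniformly recurrent (its orbit closure is a minimal subshift); hence for any factor $u$ of $x$ the set $E_u=\{n\in\nats \mid u \text{ occurs in } x \text{ at position } n\}$ is nonempty and syndetic. This is precisely the setting to which Theorem~\ref{same} applies: for such occurrence sets $E_u$ in a uniformly recurrent word, being an IP-set is equivalent to being a central set.

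Granting this, the proof is a two-line argument. By the equivalence $(1)\Leftrightarrow(3)$ of Theorem~\ref{SCCP}, $x$ and $y$ are strongly coincident if and only if for every prefix $u$ of $y$ the set $E_u$ is central. Since every central set belongs to a (minimal) idempotent of $\beta\nats$ and is therefore an IP-set, the ``only if'' direction of the corollary is immediate. For the ``if'' direction I would assume $E_u$ is an IP-set for every prefix $u$ of $y$; because $x$ is uniformly recurrent, Theorem~\ref{same} promotes each $E_u$ to a central set, and then $(3)\Rightarrow(1)$ of Theorem~\ref{SCCP} yields strong coincidence. Combining the two directions gives the stated equivalence.

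Thus the corollary carries no new content beyond Theorem~\ref{SCCP}; the only point demanding a little care — and the place where I would be most careful — is the invocation of Theorem~\ref{same}, namely verifying that its hypotheses are met, i.e. that $E_u$ is an occurrence set of a factor in a uniformly recurrent word, which comes down to the classical observation that fixed points of primitive substitutions are uniformly recurrent. I do not anticipate a genuine obstacle here: the substance of the matter resides entirely in the proof of Theorem~\ref{SCCP} and in the cited result of \cite{BPZ}.
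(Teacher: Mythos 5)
Your proposal is correct and follows exactly the paper's route: the authors likewise obtain Corollary~\ref{scip} by combining the equivalence of strong coincidence with centrality of the occurrence sets (Theorem~\ref{SCCP}, equivalently Theorem~\ref{SCCP2}) with Theorem~\ref{same}, the uniform recurrence of fixed points of primitive substitutions being the implicit hypothesis you rightly single out for verification.
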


\noindent Since IP-sets may be defined arithmetically in terms of finite sums of distinct terms of infinite sequences $(x_n)_{n\in \nats} $ of natural numbers, Corollary~\ref{scip}  provides an arithmetical approach to solving the strong coincidence conjecture. To this end, 
we show that certain abstract numeration systems first introduced by 
J.-M. Dumont and A. Thomas in \cite{DT1, DT2} provide a useful arithmetic tool to the conjecture.\\

\paragraph{\bf Acknowledgements}
The second author is supported in part by a FiDiPro grant from the Academy of Finland.

\section{Words and substitutions}

Given a finite non-empty set ${\mathcal A}$ (called the {\it alphabet}), we denote by ${\mathcal A}^*,$  ${\mathcal A}^\nats$  and ${\mathcal A}^\ints$ respectively the set of finite words,  the set of (right) infinite words, and the set of bi-infinite words over the alphabet ${\mathcal A}$. Given a finite word $u =a_1a_2\ldots a_n$ with $n \geq 1$ and $a_i \in {\mathcal A},$ we denote the length $n$ of $u$ by $|u|.$ The  \textit{empty word} will be denoted by $\varepsilon$ and we set $|\varepsilon|=0.$ We put $ {\mathcal A}^+= {\mathcal A}^*-\{\varepsilon\}.$ For each $a\in {\mathcal A},$ we let $|u|_a$  denote the number of occurrences of the letter $a$ in $u.$
Two words $u$ and $v$ in $A^*$ are said to be {\it Abelian equivalent,} denoted $u\sim_{\mbox{ab}} v,$  if and only if $|u|_a=|v|_a$ for all $a\in {\mathcal A}.$ It is readily verified that $\sim_{\mbox{ab}}$ defines an equivalence relation on ${\mathcal A}^*.$

Given an infinite word
$\omega \in {\mathcal A}^\nats,$ a word $u\in {\mathcal A}^+$ is called a  {\it factor} of $\omega$
if  $u=\omega_{i}\omega_{i+1}\cdots \omega_{i+n}$ for some natural numbers $i$ and $n.$
We denote by ${\mathcal F}_{\omega}(n)$ the set of all factors of $\omega$ of length $n,$ and set
\[{\mathcal F}_{\omega} =\bigcup _{n\in \nats} {\mathcal F}_{\omega}(n).\]
For each finite word $u$ on the alphabet ${\mathcal A}$ we set
\[\omega\big|_{u}=\{ n\in \nats \,|\, \omega_n\omega_{n+1}\ldots \omega_{n+|u|-1}=u\}.\]
In other words, $\omega\big|_u$ denotes the set of all occurrences of $u$ in $\omega.$

We say $\omega$ is {\it recurrent} if for every $u\in {\mathcal F}_{\omega}$  the set $\omega\big|_u$ is infinite.
We say $\omega$ is {\it uniformly recurrent} if for every $u\in {\mathcal F}_{\omega}$  the set $\omega\big|_u$ is syndedic, i.e., of bounded gap.

We endow ${\mathcal A}^\nats$ with the topology generated by the metric
\[d(x, y)=\frac 1{2^n}\,\,\mbox{where} \,\, n=\inf\{k :x_k\neq y_k\}\] 
whenever $x=(x_n)_{n\in \nats}$ and $y=(y_n)_{n\in \nats}$ are two elements of ${\mathcal A}^\nats.$ Let $T:{\mathcal A}^\nats \rightarrow {\mathcal A}^\nats$ denote the {\it shift} transformation defined by $T: (x_n)_{n\in \nats}\mapsto (x_{n+1})_{n\in \nats}.$ By a {\it subshift} on ${\mathcal A}$ we mean a pair $(X,T)$ where $X$ is a closed and $T$-invariant subset of ${\mathcal A}^\nats.$ A subshift $(X,T)$ is said to be {\it minimal}
whenever $X$ and the empty set are the only $T$-invariant closed subsets of $X.$ To each $\omega \in {\mathcal A}^\nats$ is associated the subshift $(X,T)$ where $X$ is the shift orbit closure of $\omega.$ If $\omega$ is uniformly recurrent, then the associated subshift $(X,T)$ is minimal.
Thus any two words $x$ and $y$ in $X $ have exactly the same set of factors, i.e., ${\mathcal F}_x={\mathcal F}_y.$ In this case we denote by ${\mathcal F}_{X}$ the set of factors of any word $x\in X.$

Two points $x ,y$ in $X$ are said to be {\it proximal} if and only if for each $N>0$ there exists $n\in \nats$ such that  \[x_nx_{n+1}\ldots x_{n+N}= y_ny_{n+1}\ldots y_{n+N}.\]
A point $x\in X$ is called {\it distal} if the only point in $ X$ proximal to $x$ is $x$ itself.  A minimal subshift $(X,T)$
is said to be {\it topologically mixing} if for every any pair of factors $u,v \in {\mathcal F}_X$ there exists a positive integer $N$ such that for each $n\geq N,$ there exists a block of the form $uWv \in  {\mathcal F}_X$ with $|W|=n.$
A minimal subshift $(X,T)$
is said to be {\it topologically weak mixing} if for every pair of factors $u,v \in {\mathcal F}_X$
the set
\[\{n \in \nats\,|\, u{\mathcal A}^nv \cap {\mathcal F}_X \neq \emptyset\}\]
is thick, i.e., for every positive integer $N,$ the set contains $N$ consecutive positive integers.\\

A {\it substitution} $\tau $ on an alphabet $ {\mathcal A}$ is
a mapping $\tau : {\mathcal A}\rightarrow  {\mathcal A}^+.$
The mapping
$\tau $  extends by concatenation to maps (also
denoted $\tau )$
$ {\mathcal A} ^*\rightarrow  {\mathcal A} ^*$ and $ {\mathcal A}^{\nats}\rightarrow  {\mathcal A}^{\nats}.$
The {\it Abelianization} of $\tau$ is the square matrix $M_\tau$ whose $ij$-th entry is equal to $|\tau(j)|_i,$ i.e., the number of occurrences of $i$ in $\tau(j).$
A substitution $\tau $ is said to be  {\it primitive} if
there is a positive integer $n$ such that
for each pair $(i,j)\in  {\mathcal A} \times  {\mathcal A},$
the  letter $i$ occurs in $\tau
^n(j).$  Equivalently if all the entries of $M_\tau^n$ are strictly positive. In this case it is well known that the matrix $M_\tau$ has a simple positive Perron-Frobenius eigenvalue called the {\it dilation} of $\tau.$ A substitution $\tau$ is said to be {\it irreducible} if the minimal polynomial of its dilation is equal to the characteristic polynomial of its Abelianization $M_\tau.$  A substitution $\tau$  is
said to be of {\it Pisot type} if its dilation is a Pisot number. Recall that a Pisot number is an algebraic integer greater than $1$ all of whose algebraic conjugates lie strictly inside the unit circle.

Let $\tau $ be a primitive substitution on $ {\mathcal A}.$
A word $\omega \in  {\mathcal A}^{\nats}$ is called a {\it fixed point} of $\tau$
if $\tau (\omega)=\omega,$ and is called a {\it periodic point} if $\tau ^m(\omega)=\omega$ for
some $m>0.$
Although $\tau $ may fail to have a fixed point, it has at least one periodic point.
Associated to $\tau $ is the topological dynamical system
$(X,T),$ where $X$ is the shift orbit closure of a periodic point $\omega$ of $\tau.$
The primitivity of $\tau $ implies that $(X,T)$ is
independent of the choice of periodic point and is  minimal.

\section{Ultrafilters, IP-sets and central sets}

\subsection{Stone-\v Cech compactification}

The Stone-\v Cech compactification $\beta \nats$ of $\nats$ is one of many compactifications of $\nats.$ It is in fact the largest  compact Hausdorff space generated by $\nats.$ More precisely $\beta \nats$ is a compact and Hausdorff space together with a continuous injection $i:\nats \hookrightarrow \beta \nats$ satisfying the following universal property: any continuous map $f:\nats \rightarrow X$
into a compact Hausdorff space $X$ lifts uniquely to a continuous map $\beta f: \beta \nats \rightarrow X,$ i.e., $f=\beta f\circ i.$ This universal property characterizes $\beta \nats$ uniquely up to homeomorphism.
While there are different methods for constructing the Stone-\v Cech compactification of $\nats,$   we shall regard $\beta \nats$  as the set of all ultrafilters on $\nats$ with the {\it Stone topology.}

\vspace{.1in}

Recall that a set $\mathcal U$ of subsets of $\nats$ is called an {\it ultrafilter} if the following conditions hold:
\begin{itemize}
\item $\emptyset \notin \mathcal U.$
\item If $A\in \mathcal U$ and $A\subseteq B,$ then $B\in \mathcal U.$
\item $A\cap B \in \mathcal U$ whenever both $A$ and $B$ belong to $\mathcal U.$
\item For every $A\subseteq \nats$ either $A\in \mathcal U$ or $A^c\in \mathcal U$ where $A^c$ denotes the complement of $A.$
\end{itemize}

For every natural number $n\in \nats,$ the set $\mathcal{U}_n=\{A\subseteq \nats \,|\, n\in A\}$ is an example of an ultrafilter. This defines an injection $i:\nats \hookrightarrow \beta \nats$ by: $n\mapsto \mathcal{U}_n.$
An ultrafilter of this form is said to be {\it principal.} By way of Zorn's lemma, one can show the existence of  non-principal (or {\it free}) ultrafilters.

It is customary to denote elements of $\beta \nats$ by letters $p,q,r \ldots.$
For each set $A \subseteq \nats,$ we set $A^\circ =\{p\in \beta \nats | A\in p\}.$ Then the set $\mathcal{B}=\{A^\circ | A\subseteq \nats\}$ forms a basis for the open sets (as well as a basis for the closed sets) of $\beta \nats$  and defines a topology on $\beta \nats$ with respect to which $\beta \nats$ is  both compact and Hausdorff.\footnote{Although the existence of free ultrafilters requires Zorn's lemma, the cardinality of $\beta \nats$ is $2^{2^\nats}$ from which it follows that $\beta \nats$ is not metrizable.} It is not difficult to see that the injection $i:\nats \hookrightarrow \beta \nats$ is continuous and satisfies the required universal property. In fact, given a continuous map $f: \nats \rightarrow X$ with $X$ compact Hausdorff, for each ultrafilter $p\in \beta \nats,$ the pushfoward $f(p)=\{f(A)\,|\, A\in p\}$ generates a unique ultrafilter $\beta f(p)$ on $X.$

There is a natural extension of the operation of addition $+$ on $\nats$ to $\beta \nats$ making $\beta \nats$ a compact {\it left-topological semigroup.} More precisely addition of two ultrafilters $p,q$ is defined by the following rule:

\[p+q=\{A \subseteq \nats \,|\, \{n\in \nats | A-n \in p\}\in q\}.\]

It is readily verified that $p+q$ is once again an ultrafilter and that
for each fixed $p\in \beta \nats,$ the mapping $q\mapsto p+q$ defines a continuous map from $\beta \nats $ into itself.\footnote{Our definition of addition of ultrafilters is the same as that given in \cite{VB2} but is the reverse of that given in \cite{HS} in which $A\in p+q$ if and only if $\{n\in \nats | A-n \in q\}\in p\}.$ In this case, $\beta \nats$ becomes a compact right-topological semigroup.} The operation of addition in $\beta \nats$ is associative and for principal ultrafilters we have $\mathcal{U}_m + \mathcal{U}_n= \mathcal{U}_{m+n}.$ However in general addition of ultrafilters is highly non-commutative. In fact it can be shown that the center is precisely the set of all principal ultrafilters \cite{HS}.

\subsection{IP-sets and central sets}

Let $(\mathcal{S}, +)$ be a semigroup. An element $p\in \mathcal{S}$  is called an {\it idempotent} if $p+p=p.$
We recall the following result of Ellis \cite{E}:

\begin{theorem}[Ellis \cite{E}]\label{Ellis} Let $(\mathcal{S}, +)$ be a compact left-topological semigroup (i.e., $\forall x\in \mathcal{S}$ the mapping $y\mapsto x+y$ is continuous). Then $\mathcal{S}$ contains an idempotent.
\end{theorem}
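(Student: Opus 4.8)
The plan is to run the classical Ellis–Namakura argument: use Zorn's lemma to extract a minimal nonempty closed subsemigroup of $\mathcal{S}$, and then squeeze an idempotent out of it by two successive appeals to minimality.

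First I would consider the family $\Sigma$ of all nonempty closed subsemigroups of $\mathcal{S}$, partially ordered by reverse inclusion. It is nonempty since $\mathcal{S}\in\Sigma$. Given a chain $\mathcal{C}\subseteq\Sigma$, the intersection $\bigcap\mathcal{C}$ is again closed and again closed under $+$, and it is nonempty: $\mathcal{C}$ has the finite intersection property (a finite subfamily of a chain has a least element, which is nonempty), so compactness of $\mathcal{S}$ forces $\bigcap\mathcal{C}\neq\emptyset$. Thus every chain has an upper bound, and Zorn's lemma yields a minimal element $R\in\Sigma$: a nonempty closed subsemigroup of $\mathcal{S}$ with no proper nonempty closed subsemigroup.

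Next, fix any $x\in R$ and look at $x+R=\{x+y\,|\,y\in R\}$. Since $\mathcal{S}$ is left-topological, the translation $\lambda_x\colon y\mapsto x+y$ is continuous, so $x+R=\lambda_x(R)$ is the continuous image of a compact set, hence compact and therefore closed (as $\mathcal{S}$ is Hausdorff). It is nonempty, it is contained in $R$ because $R$ is a subsemigroup, and it is itself a subsemigroup since $(x+y_1)+(x+y_2)=x+(y_1+x+y_2)\in x+R$. Minimality of $R$ then gives $x+R=R$, so in particular $x\in x+R$ and the set $B=\{y\in R\,|\,x+y=x\}$ is nonempty. Now $B=R\cap\lambda_x^{-1}(\{x\})$ is closed because $\{x\}$ is closed and $\lambda_x$ is continuous, and $B$ is a subsemigroup: if $y_1,y_2\in B$ then $x+(y_1+y_2)=(x+y_1)+y_2=x+y_2=x$. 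A second application of minimality of $R$ forces $B=R$, and since $x\in R=B$ we conclude $x+x=x$, so $x$ is the required idempotent.

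The only genuinely delicate points are the verifications that the sets in play are closed — which is precisely where compactness, the Hausdorff property, and continuity of the left translations $\lambda_x$ are all used — together with the routine checks that $x+R$ and $B$ are closed under $+$. The structural core of the proof is the double use of minimality: first to collapse $x+R$ back onto $R$, then to collapse $B$ onto $R$.
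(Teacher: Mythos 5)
Your argument is the standard Ellis--Namakura proof and it is correct: Zorn's lemma on the family of nonempty closed subsemigroups ordered by reverse inclusion, then the double application of minimality, first to $x+R$ and then to $\{y\in R : x+y=x\}$. The paper does not prove this theorem at all --- it simply cites Ellis \cite{E} --- so there is nothing to compare against; the only caveat worth flagging is that you use the Hausdorff property twice (to get that the compact set $x+R$ is closed, and that the singleton $\{x\}$ is closed), which is not explicit in the statement as given here but is part of the standard hypotheses and certainly holds for $\beta\nats$, the only case the paper needs.
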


\noindent It follows that $\beta \nats$ contains a non-principal ultrafilter $p$ satisfying $p+p=p.$ In fact, we could simply apply Ellis's result to the semigroup $\beta \nats - \mathcal{U}_0.$ This would then exclude the only principal idempotent ultrafilter, namely $\mathcal{U}_0.$ From here on, by an idempotent ultrafilter in $\beta \nats$ we mean a free idempotent ultrafilter.  The following striking result due to Hindman establishes a link between IP-sets and idempotent ultra filters:

\begin{theorem}[Theorem 5.12 in \cite{HS}]\label{Hind} A subset $A\subseteq \nats$ is an IP-set if and only if $A\in p$ for some idempotent $p\in \beta \nats.$
\end{theorem}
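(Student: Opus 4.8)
The plan is to prove the two implications separately, invoking Ellis's theorem (Theorem~\ref{Ellis}) for one of them and a direct combinatorial construction for the other.

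\emph{Proof of $(\Rightarrow)$: if $A$ is an IP-set then $A\in p$ for some idempotent $p$.} Fix $x_0<x_1<\cdots$ with $\{\sum_{n\in F}x_n : F\in\Fin\}\subseteq A$, and for $m\in\nats$ put $T_m=\{\sum_{n\in F}x_n : F\in\Fin,\ \min F\ge m\}$, so that $T_0\subseteq A$ and $T_{m+1}\subseteq T_m$. Let $\Gamma=\bigcap_{m\in\nats}T_m^{\circ}\subseteq\beta\nats$. First I would check $\Gamma\neq\emptyset$: each $T_m^{\circ}$ is a nonempty basic closed set and the family is nested, so compactness of $\beta\nats$ puts a point in the intersection; $\Gamma$ is also closed. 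Next I would verify that $\Gamma$ is closed under $+$. Given $p,q\in\Gamma$ and $m\in\nats$, since $T_m\in q$ it suffices to show $T_m\subseteq\{k : T_m-k\in p\}$; and if $k=\sum_{n\in F}x_n\in T_m$ and $\ell=1+\max F$, then $T_\ell\subseteq T_m-k$, because any sum of $x_n$'s over indices $\ge\ell$ can be added to $k$ to produce a sum over a finite set of indices still lying in $\{m,m+1,\dots\}$; as $p\in\Gamma$ we have $T_\ell\in p$, hence $T_m-k\in p$. Thus $T_m\in p+q$, so $p+q\in\Gamma$. Being a nonempty closed subsemigroup of the compact left-topological semigroup $\beta\nats$, $\Gamma$ is itself a compact left-topological semigroup, so by Theorem~\ref{Ellis} it contains an idempotent $p$; then $A\supseteq T_0\in p$ forces $A\in p$. (Since $\min T_m\to\infty$, no principal ultrafilter lies in $\Gamma$, so $p$ is automatically free.)

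\emph{Proof of $(\Leftarrow)$, first step: a stability lemma.} Suppose $p+p=p$ and $A\in p$; here $p$ is free, so every member of $p$ is infinite. Put $A^{\star}=\{n\in A : A-n\in p\}$. Then $A^{\star}\in p$, because $A\in p=p+p$ means exactly that $\{n : A-n\in p\}\in p$, and one intersects with $A\in p$. Moreover $A^{\star}-n\in p$ for every $n\in A^{\star}$: one has $A^{\star}-n=(A-n)\cap\{m : (A-n)-m\in p\}$, the first factor lies in $p$ since $n\in A^{\star}$, and the second lies in $p$ since $A-n\in p=p+p$. This lemma — that inside an idempotent ultrafilter one can shrink $A$ to a set $A^{\star}$ which is ``shift-absorbing'' in the sense that $A^{\star}-n\in p$ whenever $n\in A^{\star}$ — is the real point of the argument.

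\emph{Proof of $(\Leftarrow)$, second step: the construction.} I would build $x_0<x_1<\cdots$ recursively so that at each stage every nonempty subset sum of $x_0,\dots,x_n$ lies in $A^{\star}$. Start by choosing any $x_0\in A^{\star}$. Given $x_0<\cdots<x_n$ with the stated property, the set $B_{n+1}=A^{\star}\cap\bigcap\{A^{\star}-s : s\text{ a nonempty subset sum of }x_0,\dots,x_n\}$ is a finite intersection of members of $p$, by the lemma (each such $s$ lies in $A^{\star}$); hence $B_{n+1}\in p$, so it is infinite, and we may pick $x_{n+1}\in B_{n+1}$ with $x_{n+1}>x_n$. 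A short case check — a nonempty subset sum of $x_0,\dots,x_{n+1}$ either avoids $x_{n+1}$, or equals $x_{n+1}$, or is $s+x_{n+1}$ for a nonempty subset sum $s$ of $x_0,\dots,x_n$ — shows the property persists. Therefore $\{\sum_{n\in F}x_n : F\in\Fin\}\subseteq A^{\star}\subseteq A$, so $A$ is an IP-set. The anticipated obstacle is entirely in the stability lemma above; once it is in hand, both the recursion in $(\Leftarrow)$ and the semigroup verification in $(\Rightarrow)$ are routine bookkeeping, and Ellis's theorem supplies the existence of the idempotent.
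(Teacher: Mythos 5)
Your proof is correct and complete: the forward direction correctly exhibits the nonempty closed subsemigroup $\Gamma=\bigcap_m T_m^{\circ}$ and invokes Theorem~\ref{Ellis}, and the reverse direction is the standard Galvin--Glazer argument via the shift-absorbing set $A^{\star}=\{n\in A: A-n\in p\}$, with the addition convention matching the one the paper uses for $p+q$. The paper itself offers no proof, citing Theorem 5.12 of \cite{HS} instead, and your argument is essentially the one given there, so there is nothing to correct.
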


A subset $\mathcal{I}\subseteq \mathcal{S}$ is called a {\it right (resp. left) ideal} if
$\mathcal{I}+\mathcal{S}\subseteq \mathcal{I}$
 (resp. $\mathcal{S}+\mathcal{I}\subseteq \mathcal{I}$). It is called a {\it two sided ideal} if it is both a left and right ideal.
A right (resp. left) ideal $\mathcal{I}$ is called {\it minimal} if every right (resp. left) ideal $\mathcal{J}$ included in $\mathcal{I}$ coincides with $\mathcal{I}.$

Minimal right/left ideals do not necessarily exist  e.g. the commutative semigroup  $(\nats, +)$ has no minimal right/left ideals (the ideals in $\nats$ are all of the form $\mathcal{I}_n=[n, +\infty)=\{m\in \nats\,|\,m\geq n\}.)$ However,
every compact Hausdorff left-topological semigroup $\mathcal{S}$ (e.g., $\beta \nats)$  admits a smallest two sided ideal $K(\mathcal{S})$ which is at the same time the union of all minimal right ideals of $\mathcal{S}$ and the union of all minimal left ideals of $\mathcal{S}$ (see for instance \cite{HS}).
It is readily verified that the intersection of any minimal left ideal with any minimal right ideal is a group. In
particular, there are idempotents in $K(\mathcal{S}).$ Such idempotents are called minimal and their elements are called central sets:

\begin{definition} An idempotent $p$  is called a {\it minimal} idempotent of $\mathcal{S}$ if it belongs to $K(\mathcal{S}).$
\end{definition}

\begin{definition}\label{Cen1} A subset $A \subset \nats$ is called {\it central} if it is a member of some minimal idempotent in $\beta \nats.$ It is called a central$^*$-set if it belongs to every minimal idempotent in $\beta \nats.$
\end{definition}

The equivalence between definitions \ref{Cen2} and \ref{Cen1} is
due to Bergelson and Hindman in \cite{BH}. It follows from the
above definition that every central set is an IP-set and that the
property of being central is partition regular. Central sets are
known to have substantial combinatorial structure. For example,
any central set contains arbitrarily long arithmetic progressions,
and solutions to all partition regular systems of homogeneous
linear equations (see for example \cite{BHS}). Many of the rich
properties of central sets are a consequence of the {\it Central
Sets Theorem} first proved by Furstenberg in Proposition~8.21 in
\cite{F} (see also \cite{DHS, BHS, HS2}). Furstenberg pointed out
that as an immediate consequence of the Central Sets Theorem one
has that whenever $\nats$  is divided into finitely many classes,
and a sequence $(x_n)_{n\in \nats}$ is given, one of the classes
must contain arbitrarily long arithmetic progressions whose
increment belongs to $\{\sum _{n\in F}x_n | F\in \Fin\}.$

\subsection{Limits of ultrafilters}
It is often convenient to think of an ultrafilter $p$ as a $\{0,1\}$-valued, finitely additive probability measure on the power set of $\nats.$ More precisely, for any subset $A\subseteq \nats,$ we say $A$ has $p$-measure $1,$ or is $p$-large if $A\in p.$ This notion of measure gives rise to a notion of convergence of sequences indexed by $\nats$ denoted $p$-$\lim_n.$
From our point of view, it is  more natural to consider it as a mapping $p^*$ from words to words. More precisely, let $\mathcal{A}$ denote a non-empty finite set. 

\begin{definition}\label{p*} For each $p\in \beta \nats$ and $\omega\in \mathcal{A}^\nats,$ we define $p^*(\omega) \in\mathcal{A}^\nats$ by the condition:   $u\in \mathcal{A}^*$ is a prefix of $p^*(\omega) $ $\Longleftrightarrow$ $\omega \big|_u\in p.$
\end{definition}

\noindent We note that if  $u,v\in \mathcal{A}^*, $ $\omega \big|_u, \omega\big|_v \in p$  and  $|v|\geq |u|,$  then $u$ is a prefix of $v.$ In fact, if $v'$ denotes the
prefix of $v$ of length $|u|$ then as $\omega \big|_v \subseteq \omega \big|_{v'}, $ it follows that $\omega \big|_{v'}\in p$ and hence $u=v'.$
Thus $p^*(\omega)$ is well defined.
It follows immediately from the definition of $p^*,$ Definition~\ref{Cen1}  and Theorem~\ref{Hind} that

\begin{lemma}\label{IP} The set $\omega\big|_u$ is an IP-set (resp. central set) if and only if $u$ is a prefix of $p^*(\omega)$ for some idempotent (resp. minimal idempotent) $p\in \beta \nats.$
\end{lemma}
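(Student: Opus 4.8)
The plan is to unwind the definitions on both sides and check that the stated equivalence is essentially a restatement of Definition~\ref{p*} combined with the two theorems already quoted. Recall that by Definition~\ref{p*}, for a fixed $p\in\beta\nats$ the finite word $u$ is a prefix of $p^*(\omega)$ precisely when $\omega\big|_u\in p$. So the condition ``$u$ is a prefix of $p^*(\omega)$ for some idempotent $p$'' is, verbatim, the condition ``$\omega\big|_u\in p$ for some idempotent $p\in\beta\nats$,'' and likewise with ``minimal idempotent'' throughout. The content of the lemma is therefore just: $\omega\big|_u$ is an IP-set $\iff$ $\omega\big|_u\in p$ for some idempotent $p$; and $\omega\big|_u$ is a central set $\iff$ $\omega\big|_u\in p$ for some minimal idempotent $p$.

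First I would record the first equivalence as an immediate instance of Theorem~\ref{Hind} (Hindman's theorem): a subset $A\subseteq\nats$ is an IP-set if and only if $A\in p$ for some idempotent $p\in\beta\nats$; apply this with $A=\omega\big|_u$. Then translate the right-hand side using Definition~\ref{p*} to get the ``prefix of $p^*(\omega)$'' formulation. For the second equivalence, I would invoke Definition~\ref{Cen1}: a subset $A\subseteq\nats$ is central if and only if it is a member of some minimal idempotent in $\beta\nats$; again apply this with $A=\omega\big|_u$ and rewrite using Definition~\ref{p*}. One small thing worth a sentence: the well-definedness of $p^*(\omega)$ is already established in the paragraph following Definition~\ref{p*}, so ``$u$ is a prefix of $p^*(\omega)$'' makes sense, and as $u$ ranges over prefixes one gets a coherent infinite word; but for this lemma we only need the single biconditional ``$u$ prefix of $p^*(\omega)$ $\iff$ $\omega\big|_u\in p$,'' which is the definition itself.

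Honestly, there is no real obstacle here — the lemma is a bookkeeping consequence of the definitions and of Theorems~\ref{Ellis}--\ref{Hind} together with the Bergelson--Hindman characterization packaged into Definition~\ref{Cen1}. The only point requiring the faintest care is making sure the quantifier placement matches: ``IP-set'' pairs with ``\emph{some} idempotent'' (not all), and ``central'' pairs with ``\emph{some} minimal idempotent,'' which is exactly how the statement is phrased. So the proof is essentially the two-line remark: by Definition~\ref{p*}, $\omega\big|_u\in p \iff u$ is a prefix of $p^*(\omega)$; now apply Theorem~\ref{Hind} for the IP-set case and Definition~\ref{Cen1} for the central case. I would write it as a short paragraph rather than a displayed chain of equivalences.

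\begin{proof}
By Definition~\ref{p*}, for any $p\in\beta\nats$ and any $u\in\mathcal{A}^*$ we have that $u$ is a prefix of $p^*(\omega)$ if and only if $\omega\big|_u\in p$. Applying Theorem~\ref{Hind} with $A=\omega\big|_u$, the set $\omega\big|_u$ is an IP-set if and only if $\omega\big|_u\in p$ for some idempotent $p\in\beta\nats$, i.e., if and only if $u$ is a prefix of $p^*(\omega)$ for some idempotent $p$. Similarly, by Definition~\ref{Cen1}, $\omega\big|_u$ is a central set if and only if $\omega\big|_u\in p$ for some minimal idempotent $p\in\beta\nats$, i.e., if and only if $u$ is a prefix of $p^*(\omega)$ for some minimal idempotent $p$.
\end{proof}
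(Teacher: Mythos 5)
Your proof is correct and matches the paper's own treatment: the paper states Lemma~\ref{IP} as following immediately from Definition~\ref{p*}, Definition~\ref{Cen1} and Theorem~\ref{Hind}, which is exactly the unwinding you carry out.
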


It is readily verified that our definition of $p^*$ coincides with that of $p$-$\lim_n.$
More precisely, given a sequence $(x_n)_{n\in \nats}$ in a topological space and an ultrafilter $p\in \beta \nats,$  we write
$p$-$\lim_nx_n=y$ if for every neighborhood $U_y$ of $y$ one has $\{n\,| x_n \in U_y\}\in p.$
In our case we have
$p^*(\omega)=p$-$\lim_n(T^n(\omega))$  (see \cite{H2}). With this in mind, we obtain (see for instance \cite{Bl,H2}):

\begin{lemma}\label{idempotent} Let $p,q \in \beta \nats$ and $\omega\in \mathcal{A}^\nats.$ Then
\begin{itemize}
\item $p^*(T(\omega))=T(p^*(\omega))$ where $T:\mathcal{A}^\nats \rightarrow \mathcal{A}^\nats$ denotes the shift map.
\item $(p+q)^*(\omega)=q^*(p^*(\omega)).$ In particular, if $p$ is an idempotent, then $p^*(p^*(\omega))=p^*(\omega).$
\end{itemize}
\end{lemma}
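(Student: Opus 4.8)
The plan is to prove the two identities in Lemma~\ref{idempotent} directly from Definition~\ref{p*}, working prefix-by-prefix, since membership in an ultrafilter is exactly what the definition tracks.

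\textbf{First identity: $p^*(T(\omega))=T(p^*(\omega))$.} I would unwind both sides on the level of prefixes. For a word $u\in\mathcal{A}^*$, $u$ is a prefix of $p^*(T(\omega))$ iff $(T\omega)\big|_u\in p$; and since $(T\omega)_n=\omega_{n+1}$, one has $(T\omega)\big|_u=\{n\in\nats \,|\, \omega_{n+1}\cdots\omega_{n+|u|}=u\}=(\omega\big|_u)-1$, i.e. the shift of the occurrence set by $1$. On the other side, $u$ is a prefix of $T(p^*(\omega))$ iff $au$ is a prefix of $p^*(\omega)$ for $a=(p^*(\omega))_0$, hence iff $\omega\big|_{au}\in p$. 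So the content of the identity is that $(\omega\big|_u)-1\in p$ iff $\omega\big|_{au}\in p$ where $a$ is the first letter of $p^*(\omega)$. The inclusion $\omega\big|_{au}+1\subseteq\omega\big|_u$ always holds; conversely $\omega\big|_u=\bigsqcup_{b\in\mathcal{A}}(\omega\big|_{bu}+1)$ is a finite partition, and since $p$ is an ultrafilter exactly one block lies in $p$, which forces that block to be the one with $b=a$ (because $\omega\big|_a=\bigsqcup_b\omega\big|_{bu}$-projections, or more simply because $a$ was \emph{defined} as the letter making $\omega\big|_a\in p$, and $\omega\big|_{bu}\subseteq\omega\big|_b$). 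Care is needed only with the edge case $n=0$, i.e. whether $0\in\omega\big|_u$; this affects the sets by at most one element and so does not affect $p$-membership for a free idempotent, and for a principal ultrafilter $\mathcal{U}_n$ the statement is the trivial identity $T(T^n\omega)=T^{n+1}\omega$.

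\textbf{Second identity: $(p+q)^*(\omega)=q^*(p^*(\omega))$.} Again I would argue prefix-by-prefix. Fix $u\in\mathcal{A}^*$. By Definition~\ref{p*}, $u$ is a prefix of $q^*(p^*(\omega))$ iff $p^*(\omega)\big|_u\in q$. Now I claim
\[
p^*(\omega)\big|_u=\{k\in\nats \,|\, u \text{ is a prefix of } T^k(p^*(\omega))\}=\{k\in\nats \,|\, \omega\big|_u - k\in p\},
\]
where the second equality uses the first identity iterated ($p^*(T^k\omega)=T^k(p^*\omega)$, and more to the point $(T^k p^*\omega)$ has $u$ as a prefix iff $\omega\big|_u-k\in p$, shown exactly as above). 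On the other hand, by the definition of $+$ on $\beta\nats$, $\omega\big|_u\in p+q$ iff $\{k\in\nats \,|\, \omega\big|_u - k\in p\}\in q$, which by the displayed equality is iff $p^*(\omega)\big|_u\in q$. Hence $u$ is a prefix of $(p+q)^*(\omega)$ iff $u$ is a prefix of $q^*(p^*(\omega))$, and since this holds for all $u$ the two infinite words coincide. The ``in particular'' clause is then immediate: if $p+p=p$ then $p^*(p^*(\omega))=(p+p)^*(\omega)=p^*(\omega)$.

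\textbf{Main obstacle.} There is no deep obstacle; the argument is a careful bookkeeping of occurrence sets under shifting. The one genuinely delicate point is the identification $p^*(\omega)\big|_u=\{k \,|\, \omega\big|_u-k\in p\}$, i.e. that the occurrences of $u$ in the \emph{limit word} $p^*(\omega)$ are precisely the shift-parameters $k$ for which $u$ still occurs at position $0$ after shifting by $k$ inside $p$-measure-one-ly many positions of $\omega$. This requires knowing that $p^*$ commutes with arbitrary powers of $T$ (so it must be proved before, or simultaneously with, the first bullet) and a small compatibility check that the first letter of $T^k(p^*\omega)$ is read off correctly from $p$; the $n=0$ boundary effects must be acknowledged but are harmless for free ultrafilters and trivial for principal ones.
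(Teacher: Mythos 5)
Your proof is correct. The paper itself does not actually prove Lemma~\ref{idempotent}: it identifies $p^*(\omega)$ with $p$-$\lim_n T^n(\omega)$ and then defers to \cite{Bl,H2}, where both bullets follow from general properties of $p$-limits --- that $p$-$\lim$ commutes with continuous maps (giving the first identity, since $T$ is continuous) and the composition law $(p+q)$-$\lim_k x_k = q$-$\lim_m\, p$-$\lim_n x_{n+m}$ (giving the second). Your argument replaces that topological formalism with a direct combinatorial verification on occurrence sets, using only Definition~\ref{p*}, the ultrafilter axioms, and the paper's displayed definition of $+$ on $\beta\nats$. The two pivots are sound: the partition $\omega\big|_u\setminus\{0\}=\bigsqcup_{b\in\mathcal{A}}\bigl(\omega\big|_{bu}+1\bigr)$ together with the disjointness of $\omega\big|_a$ and $\omega\big|_b$ for $a\neq b$ correctly forces the surviving block to be $b=a$ in the first bullet (the $n=0$ edge case is indeed harmless, since changing a set by one element does not affect membership in a free ultrafilter, and the principal case is the trivial identity $T^{n+1}\omega=T(T^n\omega)$); and the identity $p^*(\omega)\big|_u=\{k\,|\,\omega\big|_u-k\in p\}$, obtained by iterating the first bullet, reduces the second bullet exactly to the definition $A\in p+q\iff\{k\,|\,A-k\in p\}\in q$, which also makes visible why the order on the right-hand side must be $q^*\circ p^*$ under the paper's left-topological convention. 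What your route buys is self-containedness and an explicit audit of where each convention enters; what the paper's route buys is brevity and access to the general $p$-limit machinery.
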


We will make use of the following key result in \cite{HS} (see also Theorem 1 in \cite{Bl} and Theorem 3.4 in \cite{VB2}):

 \begin{theorem}[Theorem 19.26 in \cite{HS}]\label{Berg} Let $(X,T)$ be a topological dynamical system. Then if two points $x,y \in X$ are proximal  with $y$ uniformly recurrent, then  there exists a minimal idempotent $p\in \beta \nats$ such that $p^*(x)=y.$
 \end{theorem}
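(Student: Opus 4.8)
The plan is to let $\beta\nats$ act on $X$ through the map $p\mapsto p^{*}(x):=p\text{-}\lim_n T^{n}(x)$ and to extract the required idempotent from a carefully chosen closed subsemigroup of $\beta\nats$. First I would record the properties of this action, which hold for an arbitrary compact (Hausdorff) system exactly as they do for words in Lemma~\ref{idempotent}: the map $\Phi_x\colon\beta\nats\to X$, $\Phi_x(p)=p^{*}(x)$, is continuous, being the unique continuous extension of $n\mapsto T^{n}(x)$ afforded by the universal property of $\beta\nats$; its image is $\overline{\{T^{n}(x)\mid n\in\nats\}}$; and it satisfies $(p+q)^{*}(x)=q^{*}(p^{*}(x))$ together with $\Phi_x(p+\mathcal{U}_1)=T(\Phi_x(p))$, and likewise with $x$ replaced by $y$. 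Thus $\Phi_x$ intertwines $T$ with right translation by $\mathcal{U}_1$, so it carries right ideals of $\beta\nats$ to $T$-invariant subsets of $X$. I will also use three standard facts from the algebra of $\beta\nats$: every closed right ideal contains a minimal right ideal; every minimal right ideal is closed (being of the form $p+\beta\nats$, a continuous image of the compact space $\beta\nats$); and $K(\beta\nats)$ is the union of all minimal right ideals, so any idempotent lying in a minimal right ideal is a minimal idempotent.

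Next I would exploit proximality. Put $P=\{p\in\beta\nats\mid p^{*}(x)=p^{*}(y)\}$. Since $X$ is Hausdorff, $P$ is closed, being the preimage of the diagonal of $X\times X$ under the continuous map $p\mapsto(p^{*}(x),p^{*}(y))$; it is a right ideal, because for $p\in P$ and $s\in\beta\nats$ we have $(p+s)^{*}(x)=s^{*}(p^{*}(x))=s^{*}(p^{*}(y))=(p+s)^{*}(y)$; and it is nonempty, because proximality together with compactness of $X$ yields a sequence $n_k$ along which $T^{n_k}(x)$ and $T^{n_k}(y)$ converge to a common point $z$, so any limit point $q$ of $(\mathcal{U}_{n_k})_k$ in $\beta\nats$ satisfies $q^{*}(x)=z=q^{*}(y)$. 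I would then fix a minimal right ideal $R\subseteq P$; it is closed, $R\subseteq K(\beta\nats)$, and $p^{*}(x)=p^{*}(y)$ for every $p\in R$.

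Finally I would invoke uniform recurrence of $y$. Let $E_y=\{p\in\beta\nats\mid p^{*}(y)=y\}$; it is closed (it equals $\Phi_y^{-1}(\{y\})$), it contains $\mathcal{U}_0$, and it is a subsemigroup since $(p+p_1)^{*}(y)=p_1^{*}(p^{*}(y))=p_1^{*}(y)=y$ for $p,p_1\in E_y$. The key point is that $R\cap E_y\neq\emptyset$: for any $p\in R$ the point $w=p^{*}(y)$ lies in $\overline{\{T^{n}(y)\mid n\in\nats\}}$, which is minimal because $y$ is uniformly recurrent, so $y\in\overline{\{T^{n}(w)\mid n\in\nats\}}$ and there is a sequence $m_i$ with $T^{m_i}(w)\to y$; any limit point $s$ of $(\mathcal{U}_{m_i})_i$ then satisfies $s^{*}(w)=y$, that is, $(p+s)^{*}(y)=y$, while $p+s\in R+\beta\nats\subseteq R$. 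Hence $R\cap E_y$ is a nonempty closed subsemigroup of $\beta\nats$, so by Theorem~\ref{Ellis} it contains an idempotent $u$. Then $u$ is a minimal idempotent (as $u\in R\subseteq K(\beta\nats)$), and $u^{*}(x)=u^{*}(y)$ (as $u\in R\subseteq P$) equals $y$ (as $u\in E_y$), so $p:=u$ is the ultrafilter we want.

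The main obstacle is bookkeeping with the sides: since $\Phi_x$ intertwines $T$ with \emph{right} translation by $\mathcal{U}_1$, the dynamically relevant ideals are right ideals, and one cannot replace $R$ by a minimal left ideal — minimal left ideals of the left-topological semigroup $\beta\nats$ need be neither closed nor absorbing on the side used in the computations above. The second delicate point is that an idempotent merely fixing $y$ need not be minimal; this is exactly why the argument first passes to a minimal right ideal on which the proximal pair $x,y$ is collapsed, and only afterwards, inside that right ideal, produces an idempotent fixing $y$, so that its membership in $K(\beta\nats)$ — and hence its minimality — is automatic.
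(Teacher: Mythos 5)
Your argument is correct. Note first that the paper does not actually prove Theorem~\ref{Berg}: it is imported verbatim as Theorem 19.26 of \cite{HS} (see also Theorem 1 of \cite{Bl} and Theorem 3.4 of \cite{VB2}), so there is no in-paper proof to compare against; what you have written is essentially the standard proof from those references, correctly transposed to this paper's convention in which $q\mapsto p+q$ is the continuous translation and $(p+q)^*=q^*\circ p^*$ --- which is precisely the side on which minimal \emph{right} ideals are closed and of the form $p+\beta\nats$, so your closing remarks about the bookkeeping identify exactly where a careless transcription from \cite{HS} would go wrong. The structure --- collapse the proximal pair on the closed right ideal $P$, pass to a minimal right ideal $R\subseteq P$ lying in $K(\beta\nats)$, intersect with the closed subsemigroup $E_y$ of ultrafilters fixing $y$, show $R\cap E_y\neq\emptyset$ via minimality of the orbit closure of $y$, and apply Theorem~\ref{Ellis} to extract the idempotent --- is sound at every step, and membership in $K(\beta\nats)$ also guarantees the idempotent is free, as the paper's conventions require. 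Two points a referee would ask you to make explicit: the continuity of $p\mapsto p^*(x)$ and the identity $(p+q)^*(x)=q^*(p^*(x))$ are stated in the paper (Lemma~\ref{idempotent}) only for words, so you should record that they hold unchanged for an arbitrary compact metric system; and the fact that uniform recurrence of $y$ makes $\overline{\{T^n(y)\,:\,n\in\nats\}}$ minimal (Gottschalk's characterization) is likewise stated in the paper only for subshifts. Both are standard; neither is a gap.
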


\section{Strong coincidence condition}
Let $n\geq 2$ be a positive integer  and set ${\mathcal A}=\{1,2,\ldots, n\}.$ A primitive substitution  $\tau: {\mathcal A}\rightarrow {\mathcal A}^+$ is said to satisfy the {\it strong coincidence condition} if and only if  any pair of fixed points $x$ and $y$ are {\it strongly coincident}, i.e.,  we can write $x=scx',$ and $y=tcy'$ for some $s,t \in {\mathcal A}^+,$ $c\in {\mathcal A},$  and $x',y' \in {\mathcal A}^\infty$ with
$s\sim_{\mbox{ab}} t.$  This combinatorial condition, originally due to P. Arnoux and S. Ito, is an extension of a similar condition considered by F.M. Dekking in \cite{Dek} in the case of {\it constant length substitutions}, i.e., when $|\tau(a)|=|\tau(b)|$ for all $a,b\in {\mathcal A}.$ Every such substitution $\tau$ has an algorithmically determined ``pure base" substitution and Dekking proves that the strong coincidence condition is satisfied by the pure base if and only if  the substitutive subshift associated with $\tau$ has {\it pure discrete spectrum}, i.e., is metrically isomorphic with translation on a compact Abelian group.
The Thue-Morse substitution is equal to its pure base and clearly does not satisfy the strong coincidence condition - in fact the two fixed points disagree in each coordinate.
It is conjectured however that if $\tau$ is an {\em irreducible} primitive  substitution of Pisot type,  then $\tau$ satisfies the strong coincidence condition.
This conjecture is established for binary primitive substitutions of Pisot type in \cite{BD}. Otherwise the conjecture remains open for substitutions defined on alphabets of size greater than two.  Substitutions of Pisot type provide a framework for non-constant length substitutions in which the strong coincidence condition is necessary (and, conjecturally, sufficient) for pure discrete spectrum (see \cite{BD, BD2, BK1,BK2}). We now establish the following reformulation of the strong coincidence condition in terms of central sets:

\begin{theorem}\label{SCCP2} Let $\tau$ be an irreducible primitive substitution of Pisot type. Then for any pair of fixed points $x$ and $y$ of $\tau$ the following are equivalent:
\begin{enumerate}
\item $x$ and $y$ are strongly coincident.
\item $x$ and $y$ are proximal.
\item There exists a minimal idempotent $p\in \beta \nats$ such that $y=p^*(x).$
\item For any prefix $u$ of $y,$ the set $x\big|_u$ is a central set.
\end{enumerate}
\end{theorem}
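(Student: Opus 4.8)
The plan is to prove the chain of implications $(3)\Rightarrow(2)\Rightarrow(1)$ and $(1)\Rightarrow(4)\Rightarrow(3)$, where $(2)\Leftrightarrow(3)$ is essentially Theorem~\ref{Berg} applied to the substitutive subshift $(X,T)$, and $(4)\Leftrightarrow(3)$ is essentially Lemma~\ref{IP}. So the real mathematical content is the equivalence of the dynamical/combinatorial conditions $(2)$ and $(3)$ with the strong coincidence condition $(1)$, and here is where the hypotheses ``irreducible primitive of Pisot type'' must be used — the implications $(1)\Rightarrow(2)$ and (for $(4)$) the existence of a \emph{minimal} idempotent are formal, but $(2)\Rightarrow(1)$ is not.

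First I would record the easy directions. For $(3)\Rightarrow(2)$: if $p$ is a minimal idempotent with $p^*(x)=y$, then by definition of $p$-$\lim$ (Definition~\ref{p*} and the remark that $p^*(\omega)=p\text{-}\lim_n T^n(\omega)$), every prefix $u$ of $y$ satisfies $x\big|_u\in p$, in particular $x\big|_u\neq\emptyset$, which says exactly that arbitrarily long prefixes of $y$ occur in $x$; since $x$ is a fixed point of a primitive substitution and hence uniformly recurrent, $(X,T)$ is minimal, so this is the proximality of $x$ and $y$ in the sense defined in \S2. For $(2)\Rightarrow(3)$: $x$ and $y$ lie in the minimal subshift $(X,T)$, so both are uniformly recurrent; apply Theorem~\ref{Berg} to get a minimal idempotent $p$ with $p^*(x)=y$. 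For $(1)\Rightarrow(4)$ and $(4)\Rightarrow(3)$: if every prefix $u$ of $y$ has $x\big|_u$ central, then by Lemma~\ref{IP} each such $u$ is a prefix of $p_u^*(x)$ for some minimal idempotent $p_u$; but the prefixes of $y$ are totally ordered by the initial-segment relation and $p_u^*(x)$ is a well-defined infinite word, so all these prefixes are prefixes of a single word, forcing $y=p^*(x)$ — here one needs a small compactness argument, using that the set of minimal idempotents, while not closed, still lets us extract via Theorem~\ref{Berg} a single $p$ from proximality, so it is cleanest to route $(4)\Rightarrow(3)$ through $(4)\Rightarrow(2)\Rightarrow(3)$, noting $(4)$ trivially implies every prefix of $y$ occurs in $x$, i.e. proximality.

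The substantive step is $(2)\Rightarrow(1)$: proximality of two fixed points of an irreducible primitive Pisot substitution implies strong coincidence. This is where I expect the main obstacle, and it is exactly the place the Pisot hypothesis enters. The strategy is the standard one from the Pisot theory (as in \cite{BD, BK1}): because $\tau$ is irreducible primitive of Pisot type, the subshift $(X,T)$ has a well-defined \emph{geometric realization} via the broken-line / strand-space picture, and one shows that two points are proximal if and only if their associated strands are asymptotic, which in turn forces a coincidence of tiles — equivalently, a common letter reachable after Abelian-equal prefixes. Concretely, write $x=x_0x_1\cdots$, $y=y_0y_1\cdots$; proximality gives arbitrarily long $N$ and positions $n$ with $x_n\cdots x_{n+N}=y_n\cdots y_{n+N}$. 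Using the Pisot property of the dilation $\lambda$ (all conjugates inside the unit disc) one controls the ``balanced'' discrepancy vector $\sum_{i<n}(e_{x_i}-e_{y_i})\in\mathbb{Z}^n$: it stays bounded, because its image under the contracting hyperplane projection is bounded by the asymptotics of the strands and its image along the expanding Perron direction is bounded since $x,y$ share long blocks infinitely often (so the partial abelianizations of prefixes of $x$ and of $y$ track each other). A bounded integer-valued sequence that returns to matching long blocks must actually hit $0$ at some position $n_0$ where additionally $x_{n_0}=y_{n_0}$ — giving $x=x_{[0,n_0)}\,c\,x'$, $y=y_{[0,n_0)}\,c\,y'$ with $x_{[0,n_0)}\sim_{\mathrm{ab}} y_{[0,n_0)}$, which is the strong coincidence condition. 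I would cite \cite{BD, BK1, BK2} for the strand/asymptotic-composant machinery rather than rebuild it, and present the discrepancy-vector argument as the bridge; the delicate point to get right is that the expanding-direction component of the discrepancy is genuinely bounded, which uses both primitivity (bounded block complexity / linear recurrence) and that $x,y$ are \emph{fixed} points of the \emph{same} substitution, not merely two points of $X$.

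Finally I would assemble the cycle: $(1)\Rightarrow(2)$ is immediate since a strong coincidence $x=scx'$, $y=tcy'$ with $s\sim_{\mathrm{ab}}t$ propagates under iteration of $\tau$ — applying $\tau^k$ and using that $\tau$ preserves Abelian equivalence on the (equal) abelianization, one gets matching blocks $\tau^k(c)$ at positions $|\tau^k(s)|=|\tau^k(t)|$, whose lengths tend to infinity by primitivity, so $x$ and $y$ are proximal. Combined with $(2)\Leftrightarrow(3)$ above, $(1)\Rightarrow(4)$ via Lemma~\ref{IP} (a member of a minimal idempotent is by Definition~\ref{Cen1} a central set, and $x\big|_u\in p$ for the $p$ from $(3)$), and $(4)\Rightarrow(2)$ trivially, the four conditions are equivalent. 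The only genuinely hard implication is $(2)\Rightarrow(1)$; everything else is bookkeeping with the definitions and the cited theorems.
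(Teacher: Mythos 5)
Your overall architecture resembles the paper's (which proves the cycle $(1)\Rightarrow(2)\Rightarrow(3)\Rightarrow(4)\Rightarrow(1)$), but there is a fatal gap at exactly the step you declare trivial. You claim that $(4)$ yields proximality because every prefix $u$ of $y$ has $x\big|_u\neq\emptyset$, ``which says exactly that arbitrarily long prefixes of $y$ occur in $x$.'' That is not proximality: proximality as defined in \S 2 requires $x_n\cdots x_{n+N}=y_n\cdots y_{n+N}$, i.e.\ a long common block at the \emph{same position} in both words, whereas ``occurs in $x$'' only places the block somewhere. In a minimal subshift every factor of $y$ occurs syndetically in $x$ for \emph{any} two points $x,y$, so your reading would make all pairs proximal. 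The same confusion infects your justification of $(3)\Rightarrow(2)$ (that implication is true, but via idempotency: $p^*(y)=p^*(p^*(x))=p^*(x)=y$, so $\{n: T^n(x)\in U_y\}\cap\{n: T^n(y)\in U_y\}\in p$ is nonempty). The implication $(4)\Rightarrow(2)$ is in fact the heart of the theorem and is \emph{false} without the irreducible Pisot hypothesis: the paper's own Remark records the Bergelson--Son example $a\mapsto aab$, $b\mapsto bbaab$, whose fixed points satisfy $(4)$ but not $(2)$. The paper's route is: each $p_u^*(x)$ is proximal to $x$ (by idempotency of $p_u$, not by nonemptiness of $x\big|_u$); the set $\{p_u^*(x)\}$ is \emph{finite} by part (1) of Proposition~\ref{shift proximality is finite}, which rests on the boundedly finite-to-one map onto the maximal equicontinuous factor (Theorem~4.2 of \cite{BBK}); and since $p_u^*(x)\to y$ as $|u|\to\infty$, finiteness forces $y$ to be one of the $p_u^*(x)$, hence proximal to $x$. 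Your proposal supplies none of this.

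Your sketch of $(2)\Rightarrow(1)$ also breaks at its final step. Boundedness of the discrepancy vector $\sum_{i<n}({\bf e_{x_i}}-{\bf e_{y_i}})$ (which the Pisot property does give) does not imply that it vanishes at some $n_0$ with $x_{n_0}=y_{n_0}$ --- that conclusion \emph{is} the strong coincidence condition, and asserting that a bounded integer sequence ``must actually hit $0$'' at such a position begs the question. The paper closes this gap with two nontrivial inputs: the Barge--Kellendonk result \cite{BKel} that in Pisot family tiling spaces proximality coincides with strong proximality (proximal strands eventually share actual segments, not merely stay close), and a scaling argument using that $x$ and $y$ are fixed by $\tau$, so that $\Sigma_\tau^{km}$ fixes $S_x$ and $S_y$ and multiplies the translation offset $t$ by $\lambda^{km}$, which together with the finiteness of proximal cells forces $t=0$. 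Citing \cite{BD,BK1,BK2} for the strand machinery is legitimate, but what you actually need is part (2) of Proposition~\ref{shift proximality is finite}; and even granting it, this does not repair the first gap, which concerns $(4)\Rightarrow(2)$.
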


\begin{remark}\rm {For a general primitive substitution we always have that $ (1)\Longrightarrow (2) \Longrightarrow (3) \Longrightarrow (4).$ But in general in the non-Pisot setting, these conditions need not be equivalent: For instance,
the two fixed points of the uniform substitution $a\mapsto aaab,$ $b\mapsto bbab$ are proximal but do not satisfy the strong coincidence condition.  V. Bergelson and Y. Son  \cite{BergCom} showed that the fixed points of $a\mapsto aab,$ $b\mapsto bbaab$ satisfy (4) but not (1), (2) and (3). It would be interesting to understand in general under what conditions do the idempotent ultrafilters permute the fixed points of substitutions.}
\end{remark}

\begin{proof} We first show that $ (1)\Longrightarrow (2) \Longrightarrow (3) \Longrightarrow(4).$  Clearly (2) is immediate from the definition of strong coincidence. By Theorem~\ref{Berg} we have that (2) implies (3) and hence (4).
In what follows we will show that $ (4)\Longrightarrow  (1).$ To this end, we introduce the machinery of ``strand space" (a convenient presentation of tiling space) which will allow us to apply results developed for the $\R$-action on strand space to the shift action on words.

Let $\tau$ be an irreducible primitive substitution of Pisot type on the alphabet $\mathcal{A}=\{1,\ldots,n\}$ with Abelianization $M,$  dilation $\lambda$  and normalized positive right eigenvector ${\bf w}=(w_1,\ldots,w_n)^t$: $$M{\bf w}=\lambda{\bf w} ; \,\,\,||{\bf w}||=\sqrt{w_1^2+\cdots+w_n^2}=1.$$
We denote by $(X,T)$ the subshift of $\mathcal{A}^\N$ obtained by taking the shift orbit closure of a right-infinite $\tau$-periodic word $x$ and by $(\bar{X},T)$ the subshift of
$\mathcal{A}^{\Z}$ on the shift orbit closure of a bi-infinite $\tau$-periodic word $\bar{x}$.
(We require that $\bar{x}_{-1}\bar{x}_0$ be a factor of $x$.)

From the irreducible Pisot hypothesis, the spectrum of $M$ is nonzero and disjoint from the unit circle. Thus $M$ is a linear isomorphism and $\R^n$ decomposes as an invariant direct sum $$\R^n=E^u\oplus E^s$$
with $E^u=\{t{\bf w}:t\in\R\}$ and $M^k{\bf v}\to0$ as $k\to\infty$ for all ${\bf v}\in E^s$. Let $pr^u:\R^n\to E^u$ and $pr^s:\R^n\to E^s$ denote the corresponding projections. We denote by ${\bf e_i}$ the standard unit vector ${\bf e_i}=(0,\ldots,1,\ldots,0)^t$ and by $\sigma_i$ the arc $\sigma_i=\{t{\bf e_i}:0\le t\le1\}$, $i=1,\ldots,n$. By a {\em segment},
we will mean an arc of the form ${\bf v}+\sigma_i$ with ${\bf v}\in\R^n$: $i$ is its {\em type}, ${\bf v}$ is its {\em initial vertex}, and ${\bf v}+{\bf e_i}$ is its {\em terminal vertex}.
A {\em strand} is a collection of segments $\{{\bf v_i}+\sigma_{x_i}\}_{i\in \Z}$ with the property that the terminal vertex of ${\bf v_i}+\sigma_{x_i}$ equals the initial vertex of ${\bf v_{i+1}}+\sigma_{x_{i+1}}$ for all $i\in\Z$. Such a strand {\em follows pattern}
$x=(x_{i})\in\mathcal{A}^\Z$. (Note that a strand is a collection, rather than a sequence, so if a strand follows pattern $x$, it also follows pattern $T^k(x)$ for all $k\in\Z$.) Let $\mathcal{S}$ denote the collection of all strands in $\R^n$.

Given a segment $\sigma={\bf v}+\sigma_i$, let $$\Sigma_{\tau}(\sigma)=\{M{\bf v}+\sigma_{i_1},M{\bf v}+{\bf e_{i_1}}+\sigma_{i_2},\ldots,M{\bf v}+{\bf e_{i_1}}+\cdots+{\bf e_{i_{l-1}}}+\sigma_{i_l}\},$$
provided $\tau(i)=i_1\ldots i_l$, and define $\Sigma_{\tau}:\mathcal{S}\to\mathcal{S}$ by
$$\Sigma_{\tau}(S)=\cup_{\sigma\in S}\Sigma_{\tau}(\sigma).$$
Note that if $S$ follows pattern $x$, then $\Sigma_{\tau}(S)$ follows pattern $\tau(x)$.
For $R>0$, let $\mathcal{C}^R=\{{\bf v}\in\R^n:||pr^s({\bf v})||<R\}$ denote the $R$-cylinder about $E^u$ and let $\mathcal{S}^R=\{S\in\mathcal{S}:\sigma\subset\mathcal{C}^R \text { for all }  \sigma\in S\}$ denote the collection of all strands in $\mathcal{C}^R$.

For the following, see Lemma 5.1 in \cite{BK1}.

\begin{lemma}\label{existence of R_0} There is $R_0$ so that $\Sigma_{\tau}(\mathcal{S}^R)\subset \mathcal{S}^R$ for all $R\ge R_0$. Furthermore, if $S\in\mathcal{S}^R$ for some $R$, then there is $k\in\N$ so that $\Sigma_{\tau}^k(S)\in\mathcal{S}^{R_0}$.
\end{lemma}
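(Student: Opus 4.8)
The plan is to prove Lemma~\ref{existence of R_0} in two parts, exploiting the contraction of $M$ on the stable subspace $E^s$.

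\textbf{Step 1: Invariance of $\mathcal{S}^R$ for large $R$.} First I would analyze how $\Sigma_\tau$ moves the stable components of vertices. If $\sigma = {\bf v}+\sigma_i$ is a segment, the segments in $\Sigma_\tau(\sigma)$ have initial vertices of the form $M{\bf v} + {\bf e_{i_1}} + \cdots + {\bf e_{i_{j-1}}}$, and their constituent points lie within $M\sigma \cup (M{\bf v} + P)$ where $P$ ranges over partial sums of the unit vectors ${\bf e_{i_1}},\ldots,{\bf e_{i_l}}$ plus an arc $\sigma_{i_j}$. Applying $pr^s$ and using that $pr^s$ commutes with $M$, every point $y$ of a segment in $\Sigma_\tau(\sigma)$ satisfies $\|pr^s(y)\| \le \|M(pr^s({\bf v}))\| + C$, where $C = \max\{\|pr^s({\bf e_{i_1}} + \cdots + {\bf e_{i_{j-1}}} + t{\bf e_{i_j}})\| : \tau(i) = i_1\cdots i_l,\ 1\le j\le l,\ 0\le t\le 1\}$ is a finite constant depending only on $\tau$ and the decomposition. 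Now let $\mu < 1$ be a number with $\|M|_{E^s}^k\| \le C_1\mu^k$; actually I only need that $M$ is a contraction on $E^s$ in a suitable adapted norm, so choose an equivalent norm in which $\|M{\bf v}\| \le \mu\|{\bf v}\|$ for ${\bf v}\in E^s$ with $\mu<1$ (possible since the spectral radius of $M|_{E^s}$ is $<1$). Then if $S\in\mathcal{S}^R$, every point $y$ of $\Sigma_\tau(S)$ has $\|pr^s(y)\| \le \mu R + C$, and this is $\le R$ as soon as $R \ge C/(1-\mu) =: R_0$. Hence $\Sigma_\tau(\mathcal{S}^R)\subset\mathcal{S}^R$ for all $R\ge R_0$. (One should double-check that $\Sigma_\tau(S)$ is genuinely a strand — the concatenation/terminal-initial vertex condition — but this is exactly the observation already recorded in the text that $\Sigma_\tau(S)$ follows pattern $\tau(x)$.)

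\textbf{Step 2: Every strand in some $\mathcal{S}^R$ is eventually absorbed into $\mathcal{S}^{R_0}$.} Suppose $S\in\mathcal{S}^R$ for some $R$ (which we may as well take $\ge R_0$). By the same estimate, every point $y$ of $\Sigma_\tau^k(S)$ satisfies $\|pr^s(y)\| \le \mu^k R + C(1+\mu+\cdots+\mu^{k-1}) < \mu^k R + R_0$. Since $\mu^k R \to 0$, for $k$ large enough we get $\mu^k R < R_0$ — wait, that only gives the bound $2R_0$. To land inside $\mathcal{S}^{R_0}$ exactly one should instead iterate the recursion $R_{k+1} = \mu R_k + C$ starting from $R_0' := R$; this sequence decreases monotonically to the fixed point $C/(1-\mu) = R_0$ when $R > R_0$, so $R_k \to R_0$ from above but never reaches it in finitely many steps. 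So the clean statement requires a strict inequality somewhere: note $\mathcal{S}^{R_0}$ is defined with a \emph{strict} inequality $\|pr^s(\sigma)\|<R_0$, and since $S$ itself lies in \emph{some} $\mathcal{S}^R$ we actually have a uniform bound $\sup_{y\in\bigcup S}\|pr^s(y)\| = R' < R$ strictly; running the recursion from $R'$ gives $R_k \to R_0$ with $R_k < R_0$ achieved once $R_k$ first drops below $R_0$, which happens for some finite $k$ because $R_k$ is strictly decreasing toward $R_0$ whenever $R_k>R_0$, and if $R'\le R_0$ already we are done with $k=0$ (using Step 1 to stay in $\mathcal{S}^{R_0}$, noting again the fixed point is exactly $R_0$ so one needs the strict bound on $S$). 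In all cases there is $k\in\N$ with $\Sigma_\tau^k(S)\in\mathcal{S}^{R_0}$.

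\textbf{Main obstacle.} The conceptual content is routine — it is just the standard ``stable manifold contracts'' argument — so the only real care needed is the bookkeeping around strict versus non-strict inequalities at the threshold $R_0 = C/(1-\mu)$, and the choice of an adapted norm in which $M|_{E^s}$ is an honest contraction (rather than merely eventually contracting). I would also want to state explicitly the constant $C$ and confirm it is finite, which it is because $\tau$ has finite alphabet and each $\tau(i)$ is a finite word, so only finitely many partial-sum vectors arise. Since the lemma is quoted from Lemma~5.1 of \cite{BK1}, in the paper itself I would simply cite that reference; the sketch above is what a self-contained proof would look like.
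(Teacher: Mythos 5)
First, note that the paper does not prove this lemma at all: it is quoted verbatim from Lemma~5.1 of \cite{BK1} (``For the following, see Lemma 5.1 in [BK1]''), so there is no internal proof to compare against. Your sketch is the standard absorbing-cylinder argument one would expect to find in that reference, and Step~1 is essentially right: the points of the segments of $\Sigma_\tau(\mathbf{v}+\sigma_i)$ have stable projection $M\,pr^s(\mathbf{v})$ plus a vector from a finite set of partial abelianizations, your constant $C$ is finite, and passing to a norm adapted to $M|_{E^s}$ (or, equivalently, defining the cylinder $\mathcal{C}^R$ with such a norm) is the correct way to get honest one-step contraction rather than merely eventual contraction. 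Your observation that this norm issue must be confronted is a genuine and necessary point.

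There is, however, a concrete flaw in Step~2. You correctly notice that with $R_0=C/(1-\mu)$ the iterated bound $R_k=R_0+\mu^k(R-R_0)$ never drops to $R_0$, but your repair --- asserting that $S\in\mathcal{S}^R$ forces the \emph{strict} uniform bound $\sup_{y}\|pr^s(y)\|=R'<R$ --- is false. A strand consists of infinitely many segments; each compact segment individually stays strictly inside $\mathcal{C}^R$, but the supremum over all of them can equal $R$ without being attained (indeed, for Pisot substitutions the stable projections of the lattice translates are typically dense in $E^s$, so one cannot rule this out). The correct and much simpler fix is to choose $R_0$ \emph{strictly larger} than the fixed point: set $R_0=C/(1-\mu)+1$, say. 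Then for $R\ge R_0$ one has $\mu R+C\le \mu R+(1-\mu)(R_0-1)<R$, so invariance still holds, while the iterated bound now converges to $C/(1-\mu)=R_0-1<R_0$, so some finite iterate lands strictly inside $\mathcal{S}^{R_0}$. With that one-line change (and the adapted norm you already flagged), your argument is complete.
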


With $R_0$ as in the lemma, the {\em strand space associated with $\tau$} is the global attractor
$$\mathcal{S}^*_{\tau}=\cap_{k\ge0}\Sigma_{\tau}^k(\mathcal{S}^{R_0}).$$
There is a natural metric topology on $\mathcal{S}^*_{\tau}$ in which stands $S$ and $S'$ are close if there is ${\bf v}\in\R^n$ and $r>0$, $||{\bf v}||$ small and $r$ large, so that $\sigma\in S$ and $pr^u(\sigma)\subset \{t{\bf w}:|t|\le r\}\implies {\bf v}+\sigma\in S'$. That is, $S$ and $S'$ are close if, after a small translation, $S$ and $S'$ agree in a large neighborhood of the origin. Let $d$ denote a metric inducing this topology. There is a continuous $\R$-action on $\mathcal{S}^*_{\tau}$ given by
$$(S,t)\mapsto S-t{\bf w}=\{\sigma-t{\bf w}:\sigma\in S\}.$$
This $\R$-action may not be minimal. To clean things up a bit, we define the {\em minimal strand space associated with $\tau$}, $\mathcal{S}_{\tau}$, to be the $\omega$-limit set of any $S\in\mathcal{S}^*_{\tau}$:
$$\mathcal{S}_{\tau}=\cap_{T\ge0}cl\{S-t{\bf w}:t\ge T\}.$$
The resulting space does not depend on which $S$ is chosen, and the $\R$-action on $\mathcal{S}_{\tau}$ is minimal.
Furthermore, $\Sigma_{\tau}$ restricted to $\mathcal{S}_{\tau}$ is a homeomorphism and the dynamics interact by
$$\Sigma_{\tau}(S-t{\bf w})=\Sigma_{\tau}(S)-\lambda t{\bf w}.$$
Every strand in the minimal strand space follows the pattern of some word in $\bar{X}$.

Strands $S,S'\in\mathcal{S}_{\tau}$ are {\em proximal} (with respect to the $\R$-action), denoted $S\sim_p S'$,  if
$\inf_{t\in\R}d(S-t{\bf w},S'-t{\bf w})=0$.

\begin{lemma}\label{proximality is finite} There is $B\in\N$ so that $\sharp\{S'\in\mathcal{S}_{\tau}:S'\sim_p S\}\le B$, for all $S\in\mathcal{S}_{\tau}$. If $\lim_{i\to\infty}d(S-t_i{\bf w},S'-t_i{\bf w})=0$, then there is $r_i\to\infty$ so that $S-t_i{\bf w}$ and $S'-t_i{\bf w}$ have exactly the same segments in the $r_i$-ball centered at 0.
\end{lemma}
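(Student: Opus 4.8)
The two assertions are intertwined, and the plan is to derive both from the hyperbolic self-similar structure of $(\mathcal{S}_{\tau},\Sigma_{\tau})$ together with the transversality of strands to $E^s$. Three structural facts are used. Since $\tau$ is primitive, $M$ has a strictly positive left Perron--Frobenius eigenvector, so $pr^u({\bf e_i})=c_i{\bf w}$ with $c_i>0$ for each $i$; putting $c_{\min}=\min_i c_i>0$, the vertices of any strand are strictly ordered by their $pr^u$-coordinate, consecutive ones differing by at least $c_{\min}$ there. Hence a strand in $\mathcal{S}^{R_0}$ is transverse to every hyperplane $\{pr^u=\text{const}\}$, and "agreeing up to a translation over a slab $\{|pr^u|\le r\}$" records exactly the combinatorial pattern of the strand over that slab. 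Secondly, $\Sigma_{\tau}$ is a homeomorphism of $\mathcal{S}_{\tau}$ with $\Sigma_{\tau}({\bf v}+S)=M{\bf v}+\Sigma_{\tau}(S)$, and $M$ expands $E^u$ by $\lambda$ while contracting $E^s$, so $M^{-1}$ expands $E^s$ exponentially. Thirdly, up to a reparametrization by the lengths $c_i$ the flow $(\mathcal{S}_{\tau},\R)$ is the suspension of the substitutive subshift $(\bar X,T)$, so proximality passes between the two; also $\mathcal{S}_{\tau}$ is compact and, by aperiodicity, $S$ is not proximal to $S-t{\bf w}$ for $t\ne0$, so a proximal class meets every flow orbit at most once.

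For the second assertion I would argue as follows. Given $d(S-t_i{\bf w},S'-t_i{\bf w})\to0$, unravelling the definition of the metric yields, for each large $i$, a vector ${\bf v}_i$ with $\|{\bf v}_i\|\to0$ and a radius $r_i\to\infty$ such that ${\bf v}_i+\sigma\in S'-t_i{\bf w}$ for every $\sigma\in S-t_i{\bf w}$ with $pr^u(\sigma)\subset\{t{\bf w}:|t|\le r_i\}$. Writing ${\bf v}_i=s_i{\bf w}+{\bf v}_i'$ with ${\bf v}_i'\in E^s$, and using uniform continuity of the flow to pass to the sequence $t_i+s_i$ on the $S'$-side, one reduces to the case ${\bf v}_i\in E^s$; over the slab $\{|pr^u|\le r_i\}$ the strand $S'-t_i{\bf w}\in\mathcal{S}_{\tau}$ is then the translate by the $E^s$-vector ${\bf v}_i$ of the corresponding part of $S-t_i{\bf w}\in\mathcal{S}_{\tau}$. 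Pulling back by $\Sigma_{\tau}^{-k}$ and using that every strand of $\mathcal{S}_{\tau}$ lies in the bounded cylinder $\mathcal{C}^{R_0}$, one gets $\|M^{-k}{\bf v}_i\|<2R_0$ for all $k$ up to roughly $\log_{\lambda}r_i$; since $M^{-1}$ expands $E^s$ exponentially and $r_i\to\infty$, this forces ${\bf v}_i$ to vanish for $i$ large. Then $S-t_i{\bf w}$ and $S'-t_i{\bf w}$ have exactly the same segments over $\{|pr^u|\le r_i\}$, and since a strand in $\mathcal{C}^{R_0}$ keeps all those segments inside a ball of radius $O(r_i)$, the asserted $r_i\to\infty$ follows.

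For the first assertion, the second one shows that $S'\sim_p S$ forces $S$ and $S'$, after suitable flow translates, to agree exactly on balls of radius tending to infinity; under the suspension identification this says that the base points of $S$ and $S'$ lie in a common proximal cell of $(\bar X,T)$. For a primitive substitution the proximal cells of the subshift are finite, with cardinality at most a constant $B\in\N$ depending only on $\tau$ — this is the finiteness of the coincidence rank, which follows from the classical analysis of asymptotic pairs of primitive substitutions, and in the irreducible Pisot case from the projection of $(\mathcal{S}_{\tau},\R)$ onto its maximal equicontinuous factor being finite-to-one (see \cite{BK1}). Since a proximal class meets each flow orbit at most once, $\sharp\{S'\in\mathcal{S}_{\tau}:S'\sim_p S\}$ equals the size of that proximal cell, hence is at most $B$ for all $S$.

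The hard part will be the rigidity step in the second assertion — upgrading "agreement up to a translation tending to $0$" to "agreement on the nose". The $\Sigma_{\tau}^{-k}$-pullback above only controls the rate at which ${\bf v}_i\to0$; pinning ${\bf v}_i$ to be exactly $0$ really requires a recognizability statement for $\mathcal{S}_{\tau}$, to the effect that $\Sigma_{\tau}$ is expansive for the natural coincidence structure and hence that the translates of segments of $\mathcal{S}_{\tau}$ meeting a bounded region form a uniformly discrete set after descending through enough $\Sigma_{\tau}^{-k}$. That point, and the quantitative finiteness of the coincidence rank, are where the hyperbolicity of $\Sigma_{\tau}$ and the irreducible Pisot hypothesis are essential; both are available from \cite{BK1}.
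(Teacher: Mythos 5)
The paper disposes of both assertions by citation: the uniform bound $B$ comes from Theorem 4.2 of \cite{BBK} (the map $g:\mathcal{S}_{\tau}\to X_{max}$ onto the maximal equicontinuous factor of the $\R$-action is \emph{boundedly} finite-to-one, and proximal strands must lie in a single fiber of $g$), and the second assertion is exactly the statement from \cite{BKel} that in Pisot family substitution tiling spaces proximality coincides with strong proximality. Your first assertion ultimately rests on the same input (the finite-to-one factor map), so that part is essentially the paper's argument, but two points need repair. First, your claim that finiteness of proximal cells for a primitive substitution subshift is ``classical'' and follows from the analysis of asymptotic pairs is not correct as stated: asymptotic pairs are finite in number for any aperiodic primitive substitution, but proximality is strictly weaker than asymptoticity, and for a topologically weakly mixing primitive substitution the maximal equicontinuous factor is trivial and the fiber argument gives nothing. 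The statement genuinely uses the irreducible Pisot hypothesis, and you should lean only on the Pisot-specific citation. Second, you derive assertion (1) from assertion (2); the paper's route is more direct and does not need that dependency, and you should be aware that the \emph{uniformity} of the bound $B$ over all $S$ is exactly what ``boundedly finite-to-one'' buys you --- a fiberwise finiteness statement would not suffice.

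The genuine gap is the one you flag yourself: the rigidity step in assertion (2). The pullback estimate $\|M^{-k}{\bf v}_i\|<2R_0$ for $k\lesssim\log_{\lambda}r_i$ only yields $\|{\bf v}_i\|\le 2R_0\,r_i^{-c}$ for some $c>0$, i.e.\ a decay rate, and no amount of iterating that argument converts ``${\bf v}_i\to0$'' into ``${\bf v}_i=0$''. Moreover your preliminary reduction, which absorbs the $E^u$-component $s_i{\bf w}$ of ${\bf v}_i$ into the flow parameter, changes the statement: the lemma asserts exact agreement of $S-t_i{\bf w}$ and $S'-t_i{\bf w}$ at the \emph{same} time $t_i$, and a surviving nonzero $s_i$ would destroy that even if the $E^s$-component vanishes. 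The missing ingredient is the uniform discreteness you gesture at in your last paragraph, but it is elementary here and needs no recognizability or hyperbolicity: every strand has all of its vertices in a single coset ${\bf v_0}+\Z^n$, so any translation ${\bf v}_i$ carrying a segment of $S-t_i{\bf w}$ onto a segment of $S'-t_i{\bf w}$ lies in the fixed coset $({\bf v'_0}-{\bf v_0})+\Z^n$, whose points are pairwise at distance at least $1$. Hence $\|{\bf v}_i\|\to0$ forces that coset to contain $0$ and ${\bf v}_i=0$ for all large $i$, killing both the $E^u$- and $E^s$-components at once. With that observation inserted, your direct argument closes and is in fact more self-contained than the paper's appeal to \cite{BKel}; without it (or the citation), the proof of the second assertion is incomplete.
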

\begin{proof} Theorem 4.2 of \cite{BBK} asserts that the map $g:\mathcal{S}_{\tau}\to X_{max}$ onto the maximal equicontinuous factor of the $\R$-action on $\mathcal{S}_{\tau}$ is boundedly finite-to-one. As proximallity is trivial for equicontinuous actions,
$\{S'\in\mathcal{S}_{\tau}:S'\sim_p S\}\subset g^{-1}(g(S))$.

The second assertion follows from \cite{BKel}: $\mathcal{S}_{\tau}$ is a Pisot family substitution tiling space and in all such spaces proximality and strong proximality (the condition that the strands exactly agree in large balls)  are the same relation.
\end{proof}

\begin{lemma}\label{recognizability} Suppose that $S\in\mathcal{S}$ follows pattern $x\in\bar{X}$. Then there is a unique $S'\in\mathcal{S}$ such that $\Sigma_{\tau}(S')=S$ and $S'$ follows a pattern in $\bar{X}$.
\end{lemma}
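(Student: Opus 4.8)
The plan is to reduce this geometric ``unique desubstitution'' statement to the purely combinatorial recognizability of $\tau$ on the bi-infinite subshift $\bar{X}$, and then transport the combinatorial preimage back to strands using that $M$ is a linear isomorphism (which we already have from the Pisot hypothesis). First I would record that $\tau$ is aperiodic --- its dilation is irrational, so $\bar{X}$ is infinite --- and hence, by Moss\'e's recognizability theorem, there is a constant $L$ so that for $x\in\bar{X}$ whether position $0$ begins a $\tau$-block in a desubstitution of $x$ is determined by $x_{[-L,L]}$. Equivalently: each $x\in\bar{X}$ has a unique ``cutting set'' $C\subset\Z$ and a unique $y\in\bar{X}$ such that, reading the letters of $x$ in the blocks delimited by $C$, the $j$-th block equals $\tau(y_j)$; in particular $x=T^k(\tau(y))$ for a unique such $y$ and a unique $0\le k<|\tau(y_0)|$.

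Next I would set up the dictionary between strands and pointed words. A strand $S$ following pattern $x$ is determined, up to the relabeling freedom inherent in a bi-infinite collection, by $x$ together with the location in $\R^n$ of one of its vertices; laying the arcs $\sigma_{x_i}$ tip-to-tail produces the full vertex set $V(S)$. A direct computation from the definition of $\Sigma_\tau$ shows that $\Sigma_\tau$ carries a segment ${\bf v}+\sigma_i$ to a block of segments running from the vertex $M{\bf v}$ to the vertex $M({\bf v}+{\bf e_i})$ (the displacement $\sum_j {\bf e}_{i_j}$ across $\tau(i)$ is precisely the $i$-th column $M{\bf e_i}$). Hence $\Sigma_\tau$ sends each vertex ${\bf w}\in V(S')$ to the vertex $M{\bf w}$ of $\Sigma_\tau(S')$, it preserves the tip-to-tail condition, and the sub-collection $M\cdot V(S')\subset V(\Sigma_\tau(S'))$ is exactly the set of vertices of $\Sigma_\tau(S')$ that begin a $\tau$-block of its pattern $\tau(y)$.

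For existence, given $S$ following $x\in\bar{X}$, I would apply recognizability to obtain $y\in\bar{X}$ with $x=T^k(\tau(y))$; as strands are collections, $S$ also follows the pattern $\tau(y)$. The cutting set of $\tau(y)$ singles out a well-defined block-initial sub-collection $W\subset V(S)$, and I would define $S'$ to be the strand following pattern $y$ whose vertex set is $M^{-1}(W)$ ($M^{-1}$ exists since $M$ is a linear isomorphism); since consecutive vertices of $W$ differ by the displacement $M{\bf e}_{y_i}$ of the block $\tau(y_i)$, consecutive vertices of $M^{-1}(W)$ differ by ${\bf e}_{y_i}$, so $M^{-1}(W)$ really is the vertex set of a strand following $y$, and $\Sigma_\tau(S')=S$ follows block-by-block. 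For uniqueness, suppose $S'$ follows some $y'\in\bar{X}$ and $\Sigma_\tau(S')=S$. Then $S$ follows the pattern $\tau(y')$, so $\tau(y')$ realizes the same bi-infinite word (up to shift) as $\tau(y)$; by the uniqueness clause of recognizability $y'$ agrees with $y$ up to shift and the two desubstitutions have the same cutting set, so $M\cdot V(S')$ equals the same sub-collection $W$ as for the $S''$ built above. Applying $M^{-1}$ gives $V(S')=V(S'')$, and equal vertex set together with equal pattern forces $S'=S''$.

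The step I expect to be the main obstacle is the bookkeeping in this dictionary: pinning down exactly which vertices of $S$ constitute the block-initial sub-collection $W$ while respecting the shift/translation ambiguity of collections, and verifying that ``erase the non-block-initial vertices and apply $M^{-1}$'' genuinely returns an element of $\mathcal{S}$ whose pattern is exactly $y$ (not merely a pattern in the $\tau$-orbit closure of $\bar{X}$). All the combinatorial substance is packaged in Moss\'e's theorem; once that is invoked, the geometric half is linear algebra with $M^{-1}$, provided one stays disciplined about working with collections rather than indexed sequences.
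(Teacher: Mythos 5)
Your proposal is correct and follows essentially the same route as the paper: Moss\'e's recognizability supplies the unique (up to shift) combinatorial preimage, and the invertibility of $M$ transports it to a strand, with $\Sigma_{\tau}$ acting on vertices by ${\bf v}\mapsto M{\bf v}$. The only cosmetic difference is in closing out uniqueness --- the paper translates an arbitrary strand following the desubstituted word and then uses aperiodicity of $x$ to force the residual translation ${\bf v}$ to satisfy $M{\bf v}=0$, whereas you invoke uniqueness of the cutting set to pin down the block-initial vertex set directly; both arguments rest on the same aperiodicity input.
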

\begin{proof} It follows from Moss\'{e}'s recognizability result (\cite{M}), that there is $x'\in\bar{X}$ so that $\tau(x')=T^k(x)$ for some $k\in\Z$, and such an $x'$ is unique, up to shift. Let $S_1$ be any strand that follows pattern $x'$. Then $\Sigma_{\tau}(S_1)$ follows pattern $x$ so there is ${\bf v_1}\in\R^n$ so that $\Sigma_{\tau}(S_1)=S+{\bf v_1}$:
let $S'=S_1-M^{-1}{\bf v_1}$. If $S''$ is a strand that follows a pattern in $\bar{X}$ and $\Sigma_{\tau}(S'')=S$, that pattern must be (a shift of) $x'$, so $S''=S'+{\bf v}$ for some ${\bf v}\in\R^n$. Then $S=S+M{\bf v}$, and it must be that $M{\bf v}=0$, since $x$ is not $T$-periodic (see \cite{HZ}). Thus ${\bf v}=0$ and $S'$ is unique.
\end{proof}

\begin{lemma}\label{S_x} Given $x=(x_i)\in \bar{X}$ there is a unique $S=S_x=\{{\bf v_i}+\sigma_{x_i}\}_{i\in\Z}\in\mathcal{S}_{\tau}$ with the properties: $S$ follows pattern $x$; and
${\bf v_0}\in E^s$.
\end{lemma}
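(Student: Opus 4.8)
The plan is to prove existence and uniqueness separately; uniqueness is where the real content lies.

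For \emph{existence}, fix any $S\in\mathcal{S}_{\tau}$; it follows some pattern $z\in\bar{X}$. By minimality of $(\bar{X},T)$ we may write $x=\lim_{j}T^{n_j}(z)$. Re-indexing $S$ so that its $(n_j)$-th segment becomes the $0$-th one (this does not change the strand, only which enumeration is used), and then translating by $-t_j{\bf w}$, where $t_j$ is the $E^u$-coordinate of the initial vertex of that segment, produces $S_j:=S-t_j{\bf w}\in\mathcal{S}_{\tau}$ which follows pattern $T^{n_j}(z)$ and whose $0$-th initial vertex lies in $E^s$, hence in the ball of radius $R_0$ there (using $\mathcal{S}_{\tau}\subset\mathcal{S}^{R_0}$, immediate from the definitions). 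Since $\mathcal{S}_{\tau}$ is compact, a subsequential limit $S_x$ of the $S_j$ follows pattern $x$ and, the $0$-th initial vertices staying bounded in the closed set $E^s$, has $0$-th initial vertex in $E^s$.

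For \emph{uniqueness}, suppose $S=\{{\bf v_i}+\sigma_{x_i}\}$ and $S'=\{{\bf v'_i}+\sigma_{x_i}\}$ both lie in $\mathcal{S}_{\tau}$, both follow pattern $x$, and ${\bf v_0},{\bf v'_0}\in E^s$. Since ${\bf v_{i+1}}={\bf v_i}+{\bf e_{x_i}}$ and likewise for $S'$, the vector ${\bf c}:={\bf v'_i}-{\bf v_i}={\bf v'_0}-{\bf v_0}$ is independent of $i$, so $S'=S+{\bf c}$ with ${\bf c}\in E^s$; it remains to force ${\bf c}=0$. Here I would iterate Moss\'{e} recognizability. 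Because $\Sigma_{\tau}$ is a homeomorphism of $\mathcal{S}_{\tau}$ and (from the definition of $\Sigma_{\tau}$) $\Sigma_{\tau}\big(\Sigma_{\tau}^{-1}(S)+M^{-1}{\bf c}\big)=\Sigma_{\tau}\big(\Sigma_{\tau}^{-1}(S)\big)+{\bf c}=S'$, while $\Sigma_{\tau}^{-1}(S)+M^{-1}{\bf c}$ follows the same pattern as $\Sigma_{\tau}^{-1}(S)$, which lies in $\bar{X}$, the uniqueness clause of Lemma~\ref{recognizability} applied to $S'$ gives $\Sigma_{\tau}^{-1}(S')=\Sigma_{\tau}^{-1}(S)+M^{-1}{\bf c}$; in particular this strand lies in $\mathcal{S}_{\tau}$, a fact read off after the identification rather than checked directly. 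By induction $\Sigma_{\tau}^{-k}(S')=\Sigma_{\tau}^{-k}(S)+M^{-k}{\bf c}$ with both sides in $\mathcal{S}_{\tau}\subset\mathcal{S}^{R_0}$. Since $E^s$ is $M$-invariant, comparing the stable projections of corresponding vertices of these two strands gives $\|M^{-k}{\bf c}\|<2R_0$ for all $k\ge0$. But every eigenvalue of $M|_{E^s}$ is a Galois conjugate of the Pisot number $\lambda$, hence of modulus $<1$, so $\|M^{-k}{\bf c}\|$ is unbounded unless ${\bf c}=0$. Hence ${\bf c}=0$ and $S=S'$.

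The main obstacle is the recognizability iteration in the uniqueness step: the delicate point is to propagate the identity $\Sigma_{\tau}^{-k}(S')=\Sigma_{\tau}^{-k}(S)+M^{-k}{\bf c}$ by \emph{deducing} membership of the translated strand in $\mathcal{S}_{\tau}$ from Lemma~\ref{recognizability}, rather than attempting to establish it head-on, which would amount to the statement being proved. Everything else (the telescoping identity for the ${\bf v_i}$, $M$-invariance of $E^s$, the bound $\|M^{-k}{\bf c}\|<2R_0$ from $\mathcal{S}_{\tau}\subset\mathcal{S}^{R_0}$, and the compactness in the existence step) is routine.
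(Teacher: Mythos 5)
Your proof is correct and follows essentially the same route as the paper's: existence by translating a reference strand in $\mathcal{S}_{\tau}$ along $E^u$ according to shifts approximating $x$ (the paper starts from an explicit $\Sigma_{\tau}^m$-fixed strand over a $\tau$-periodic word, you from an arbitrary element of $\mathcal{S}_{\tau}$ --- a cosmetic difference), and uniqueness by writing $S'=S+{\bf c}$ with ${\bf c}\in E^s$, pulling back via recognizability to get $\Sigma_{\tau}^{-k}(S')=\Sigma_{\tau}^{-k}(S)+M^{-k}{\bf c}$ inside $\mathcal{S}^{R_0}$, and noting $\|M^{-k}{\bf c}\|\to\infty$ unless ${\bf c}=0$. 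Your care in deriving membership of the translated pullback in $\mathcal{S}_{\tau}$ from the uniqueness clause of Lemma~\ref{recognizability} is exactly the intended reading of the paper's argument (which, incidentally, mis-cites itself at that step).
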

\begin{proof} Let $y=(y_i)\in\bar{X}$ be $\tau$-periodic, say $\tau^m(y)=y$ with $m>0$. Let $S_y$ be the strand $S_y=\{\ldots, -{\bf e_{y_{-2}}}-{\bf e_{y_{-1}}}+\sigma_{y_{-2}},
-{\bf e_{y_{-1}}}+\sigma_{y_{-1}}\}\cup\{\sigma_{y_0},{\bf e_{y_0}}+\sigma_{y_1}, {\bf e_{y_0}}+{\bf e_{y_1}}+\sigma_{y_2},\ldots\}$. Then $\Sigma_{\tau}^{km}(S_y)=S_y$ for all $k\in\N$, so $S_y\in\mathcal{S}_{\tau}$, and $S_y$ follows pattern $y$. Since $\bar{X}$ is minimal under the shift, there are $k_i\in\N$ with $T^{k_i}(y)\to x$. Let
$t_i=\sum_{j=0}^{n_i-1}w_{y_j}$. Then
$S_y-t_i{\bf w}\to S_x\in\mathcal{S}_{\tau}$ with $S_x$ as desired.

Suppose that $S'_x\in\mathcal{S}_{\tau}$ also follows pattern $x$ and has initial vertex on $E^s$. Then $S'_x=S_x+{\bf v}$ for some ${\bf v}\in E^s$. It follows from Lemma \ref{S_x} that, for each $k\in\N$, $\Sigma_{\tau}^{-k}(S'_x)=\Sigma_{\tau}^{-k}(S_x)+M^{-k}{\bf v}\in \mathcal{S}_{\tau}$. If ${\bf v}$ is not 0, there is $k$ large enough so that $||M^{-k}{\bf v}||> 2R_0$. But then $\Sigma_{\tau}^{-k}(S'_x)$ and $\Sigma_{\tau}^{-k}(S_x)$ can't both be in $\mathcal{S}_{\tau}\subset \mathcal{S}^{R_0}$. Thus $S_x$ is unique.
\end{proof}

\begin{lemma}\label{finite differences} There is $K\in\N$ with the property: if $x=(x_i),x'=(x'_i)\in\bar{X}$ and $S_x,S_{x'}\in\mathcal{S}_{\tau}$ are as in Lemma \ref{S_x} with initial vertices ${\bf v_0},{\bf v'_0}\in E^s$, and $k$th vertices ${\bf v_k}={\bf v_0}+\sum_{j=0}^{k-1}{\bf e_{x_j}},{\bf v'_k}={\bf v'_0}+\sum_{j=0}^{k-1}{\bf e_{x'_j}}$, $k\in\N$, then $\sharp\{{\bf v_k}-{\bf v'_k}:k\in\N\}\le K$, and, furthermore, $\sharp\{pr^u({\bf v_k}-{\bf v'_k}):k\in\N,x,x'\in X\}\le K$, where in this last bound the $x,x'$ vary over all of $X$.
\end{lemma}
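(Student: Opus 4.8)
The plan is to reduce the statement to counting integer points in a bounded region of $\R^n$. For a word $y$, write $\ell(y,k)=\sum_{j=0}^{k-1}{\bf e_{y_j}}\in\Z^n$ for the Abelianization of its length-$k$ prefix, so that ${\bf v_k}={\bf v_0}+\ell(x,k)$ and ${\bf v'_k}={\bf v'_0}+\ell(x',k)$. Setting ${\bf z_k}:=\ell(x,k)-\ell(x',k)=({\bf v_k}-{\bf v'_k})-({\bf v_0}-{\bf v'_0})\in\Z^n$, the coordinates of ${\bf z_k}$ sum to $|x_0\cdots x_{k-1}|-|x'_0\cdots x'_{k-1}|=0$. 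Replacing ${\bf v_k}-{\bf v'_k}$ by ${\bf z_k}$ is what buys an \emph{integer} vector, at the price of the fixed error ${\bf v_0}-{\bf v'_0}\in E^s$.

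First I would bound $pr^s({\bf z_k})$. By Lemma~\ref{existence of R_0} and the definition of $\mathcal{S}_{\tau}$ we have $\mathcal{S}_{\tau}\subset\mathcal{S}^{R_0}$, so every vertex of $S_x$ and of $S_{x'}$ lies in the $R_0$-cylinder $\mathcal{C}^{R_0}$ about $E^u$; hence $\|pr^s({\bf v_k})\|,\|pr^s({\bf v'_k})\|\le R_0$ for all $k$ and, since ${\bf v_0},{\bf v'_0}\in E^s$, also $\|{\bf v_0}\|,\|{\bf v'_0}\|\le R_0$. Therefore $\|pr^s({\bf z_k})\|\le 4R_0$ for every $k$, the bound being uniform in $x,x'$. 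This is just the balancedness of Pisot substitution words, repackaged in the strand picture.

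Next I would bound $pr^u({\bf z_k})$ --- the one place the coordinate-sum-zero property enters. Writing $pr^u({\bf z_k})=t_k{\bf w}$ and using $pr^u({\bf z_k})+pr^s({\bf z_k})={\bf z_k}$, a pairing with ${\bf 1}=(1,\ldots,1)^t$ yields $0=\langle{\bf 1},{\bf z_k}\rangle=t_k\langle{\bf 1},{\bf w}\rangle+\langle{\bf 1},pr^s({\bf z_k})\rangle$. Since ${\bf w}$ is the positive Perron eigenvector, $\langle{\bf 1},{\bf w}\rangle=\sum_i w_i>0$, so by Cauchy--Schwarz $|t_k|\le\|{\bf 1}\|\,\|pr^s({\bf z_k})\|/\sum_i w_i\le 4R_0\sqrt{n}/\sum_i w_i$; as $\|{\bf w}\|=1$, this bounds $\|pr^u({\bf z_k})\|$ by a constant depending only on $\tau$. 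Combining the two estimates gives $\|{\bf z_k}\|\le C$ for some $C=C(\tau)$ independent of $k,x,x'$, so ${\bf z_k}$ lies in the finite set $\Z^n\cap B(0,C)$; put $K:=\sharp(\Z^n\cap B(0,C))$.

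Finally I would harvest both conclusions. For fixed $x,x'$ the difference ${\bf v_0}-{\bf v'_0}$ is fixed, so ${\bf v_k}-{\bf v'_k}=({\bf v_0}-{\bf v'_0})+{\bf z_k}$ takes at most $K$ values as $k$ runs over $\N$, which is the first bound. Since ${\bf v_0}-{\bf v'_0}\in E^s$, we also have $pr^u({\bf v_k}-{\bf v'_k})=pr^u({\bf z_k})\in pr^u(\Z^n\cap B(0,C))$, a set of at most $K$ elements --- a bound no longer involving $x,x'$ at all; for one-sided $x,x'\in X$ one reads ${\bf v_k}$ off a bi-infinite extension in $\bar X$, which changes neither $\ell(\cdot,k)$ nor the $R_0$-estimates, so the same $K$ serves over all of $X$. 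I do not expect a real obstacle here; the only things to be careful about are the uniformity of the $pr^s$-bound (that it comes from the single constant $R_0$ of Lemma~\ref{existence of R_0}, not from $x,x'$) and the bookkeeping in the one-sided case. The conceptual crux is that without the coordinate-sum-zero identity the estimate would genuinely fail, since $pr^u({\bf v_k})$ and $pr^u({\bf v'_k})$ separately grow linearly in $k$.
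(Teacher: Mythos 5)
Your proof is correct and takes essentially the same route as the paper's: both reduce the statement to counting integer points with coordinate sum zero in a bounded region of $\R^n$, using the $R_0$-cylinder bound on $\mathcal{S}_{\tau}$ to control the $E^s$-component and the sum-zero constraint to control the $E^u$-component. The only real difference is cosmetic --- the paper recentres ${\bf v_k}-{\bf v_0}$ and ${\bf v'_k}-{\bf v'_0}$ separately by a reference lattice point ${\bf \bar{z}_k}$ on the hyperplane $\{\sum_i z_i=k\}$ near $E^u$ and takes $K$ to be the square of a point count, whereas you difference the two strands directly (so ${\bf \bar{z}_k}$ cancels) and you spell out, via the pairing with $(1,\ldots,1)^t$ and $\sum_i w_i>0$, the step the paper leaves implicit, namely that coordinate sum zero together with bounded $pr^s$ forces bounded $pr^u$.
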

\begin{proof} Let $P=\max_{i\in\{1,\ldots,n\}}||pr^s({\bf z})||$. The strands $S_x-{\bf v_0}$ and $S_{x'}-{\bf v'_0}$ are in $\mathcal{S}^{2R_0}$ and ${\bf v_k}-{\bf v_0},{\bf v'_k}-{\bf v'_0}\in\Z^n\cap\{{\bf z}=(z_1,\ldots,z_n)^t:\sum_{i=1}^nz_i=k\}$. Let ${\bf \bar{ z}_k}$ be the closest point in $\Z^n\cap\{{\bf z}=(z_1,\ldots,z_n)^t:\sum_{i=1}^nz_i=k\}$ to $E^u$, Then $||pr^s({\bf \bar{z}_k})||\le P$,  ${\bf v_k}-{\bf v_0}-{\bf \bar{z}_k},{\bf v'_k}-{\bf v'_0}-{\bf \bar{z}_k}\in \{{\bf z}=(z_1,\ldots,z_n)^t\in\Z^n:\sum_{i=1}^nz_i=0, ||{\bf z}||\le 2R_0+P\}$, and $pr^u({\bf v_k}-{\bf v'_k})=pr^u(({\bf v_k}-{\bf v_0}-{\bf \bar{z}_k})-({\bf v'_k}-{\bf v'_0}-{\bf \bar{z}_k} ))$. Thus $K=(\sharp\{{\bf z}=(z_1,\ldots,z_n)^t\in\Z^n:\sum_{i=1}^nz_i=0, ||{\bf z}||\le 2R_0+P\})^2$ will work.
\end{proof}

\begin{proposition}\label{shift proximality is finite} Let $x,y\in X$. Then:
\begin{enumerate}
\item $\{x'\in X:x' \text{ is proximal with } x\}$ is finite; and
\item if $x$ and $y$ are proximal and fixed by $\tau$, then $x$ and $y$ are strongly coincident.
\end{enumerate}
\end{proposition}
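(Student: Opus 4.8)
The plan is to transfer the combinatorial statement about the subshift $(X,T)$ into the strand space $\mathcal{S}_{\tau}$, where the $\R$-action and the substitution-induced map $\Sigma_{\tau}$ give us geometric control, and then push the conclusions back to words. For part (1), given $x\in X$, I would lift $x$ (and any $x'\in X$ proximal to it) to the canonical strands $S_x, S_{x'}\in\mathcal{S}_{\tau}$ produced by Lemma~\ref{S_x}, with initial vertices on $E^s$. The key point is that proximality of $x$ and $x'$ as points of the shift should force proximality of $S_x$ and $S_{x'}$ under the $\R$-action: if $x$ and $x'$ agree on longer and longer central blocks, the corresponding strands agree on longer and longer segments near the origin up to a small translation, which is exactly $\R$-proximality. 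Then Lemma~\ref{proximality is finite} bounds the number of strands $\R$-proximal to $S_x$ by $B$, and Lemma~\ref{finite differences} controls how the vertices of $S_{x'}$ can sit relative to those of $S_x$ (the difference $pr^u(\mathbf{v_k}-\mathbf{v'_k})$ takes at most $K$ values). Combining these finiteness statements — finitely many proximal strands, and each proximal strand determining at most boundedly many words in $X$ via the bounded difference set — gives that $\{x'\in X : x' \text{ proximal to } x\}$ is finite.

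For part (2), suppose $x=\tau(x)$ and $y=\tau(y)$ are proximal fixed points. I would look at the canonical strands $S_x$ and $S_y$ from Lemma~\ref{S_x}. Because $x$ is a fixed point, $\Sigma_{\tau}(S_x)$ follows pattern $\tau(x)=x$; by the uniqueness clause of Lemma~\ref{S_x} (initial vertex on $E^s$ is preserved by $\Sigma_{\tau}$ since $E^s$ is $M$-invariant and $M^{-1}$ contracts it back into the right cylinder), $\Sigma_{\tau}(S_x)$ must differ from $S_x$ by an element of $E^s$, and the recognizability argument of Lemma~\ref{recognizability}/Lemma~\ref{S_x} forces that element to be $0$, so $\Sigma_{\tau}(S_x)=S_x$ and likewise $\Sigma_{\tau}(S_y)=S_y$. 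Proximality of $x,y$ gives $S_x\sim_p S_y$, and by the strong-proximality part of Lemma~\ref{proximality is finite}, there is a sequence $t_i\to\infty$ and radii $r_i\to\infty$ such that $S_x-t_i\mathbf{w}$ and $S_y-t_i\mathbf{w}$ have \emph{exactly} the same segments in the $r_i$-ball about $0$. Pulling this agreement back by a suitable power of $\Sigma_{\tau}^{-1}$ (using that $\Sigma_{\tau}$ is a homeomorphism of $\mathcal{S}_{\tau}$ fixing both strands and scaling the $\R$-action by $\lambda$), I can arrange agreement of $S_x$ and $S_y$ in a ball containing the origin in its interior — in particular, there is a vertex common to both strands at or near $0$. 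A common vertex $\mathbf{v}$ of $S_x$ and $S_y$ lying past position $0$ means that in $x$ we have read a prefix $s$ and in $y$ a prefix $t$ with the same Abelianized vertex displacement from $\mathbf{v_0}=\mathbf{v'_0}$ (both initial vertices are the single point of $E^s$, hence equal), i.e. $|s|_a=|t|_a$ for all $a$, so $s\sim_{\mathrm{ab}}t$, and the segments emanating from $\mathbf{v}$ in $S_x$ and $S_y$ have the same type $c$, giving $x=scx'$, $y=tcy'$ with $s\sim_{\mathrm{ab}}t$ — this is precisely strong coincidence. (Here I use that $x,y$ are right-infinite fixed points, so the relevant vertices occur at nonnegative positions; a small amount of care about signs and about the two initial vertices being literally equal will be needed.)

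The main obstacle I anticipate is making the translation between shift-proximality of words and $\R$-proximality of the canonical strands fully rigorous, and in particular controlling the translation vectors $\mathbf{v}$ that appear in the definition of $d$ on $\mathcal{S}_{\tau}$: agreement of $x$ and $x'$ on a long central block only gives that the strands agree after some small translation, and one must argue that this translation can be taken along $E^s$ (bounded by $2R_0$ since both canonical strands lie in $\mathcal{S}^{R_0}$) and then absorbed — this is exactly where Lemma~\ref{finite differences} does the heavy lifting, and where the irreducible Pisot hypothesis enters through the hyperbolic splitting $\R^n=E^u\oplus E^s$. The second delicate point is the reduction in part (2) from ``strands agree in a large ball after a translation $t_i\mathbf{w}$ along the expanding direction'' to ``strands share a vertex at a nonnegative word-position'': one has to use the $\Sigma_{\tau}$-invariance of $S_x$ and $S_y$ to trade the translation along $E^u$ for a rescaling, bringing the region of agreement back over the origin without destroying it. Once a genuinely shared vertex is located, the combinatorial conclusion (equal type of the outgoing segment, equal letter-counts in the prefixes) is immediate from the definitions of segment, strand, and Abelianization.
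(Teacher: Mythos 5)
Your overall strategy is the paper's: lift $x$, $x'$, $y$ to the canonical strands of Lemma~\ref{S_x}, control the possible translation vectors with Lemma~\ref{finite differences}, and invoke Lemma~\ref{proximality is finite} both for the finiteness bound and for the strong-proximality (exact agreement of segments) statement. Part (1) is essentially correct in outline, though you have one direction reversed: word-proximality of $x$ and $x'$ gives $S_x\sim_p S_{x'}+{\bf v}$ where the $E^s$-component of ${\bf v}$ is forced to \emph{vanish} (by the uniqueness clause of Lemma~\ref{S_x} applied to limits of $S_x-t_j{\bf w}$ and $S_{x'}-(t_j-t){\bf w}$), so ${\bf v}=t{\bf w}$ lies in $E^u$; it is then Lemma~\ref{finite differences} that makes the set of possible values $pr^u({\bf v_k}-{\bf v'_k})$ --- hence of possible ${\bf v}$ --- finite. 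You describe the translation as ``taken along $E^s$ and then absorbed,'' which is backwards, but the right lemmas are in place and the final count (finitely many ${\bf v}$ times at most $B$ proximal strands) is the paper's.

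The genuine gap is in part (2), at the sentence ``Proximality of $x,y$ gives $S_x\sim_p S_y$.'' What the part-(1) argument actually yields is $S_x\sim_p S_y+t{\bf w}$ for some $t\in\R$, and if $t\neq 0$ the exactly-shared segments live in $S_x$ and the \emph{translate} $S_y+t{\bf w}$: a common vertex then only says that the abelianizations of the corresponding prefixes of $x$ and $y$ differ by $t{\bf w}$, not that they are equal, so strong coincidence does not follow. Killing $t$ is precisely where the hypothesis that $x$ and $y$ are fixed by $\tau$ enters: since $S_x$ and $S_y$ are fixed by $\Sigma_{\tau}^{m}$ and $\Sigma_{\tau}(S-t{\bf w})=\Sigma_{\tau}(S)-\lambda t{\bf w}$, one gets $S_x\sim_p S_y+\lambda^{km}t{\bf w}$ for every $k$, and the bound $B$ of Lemma~\ref{proximality is finite} together with the non-periodicity of $y$ forces $t=0$. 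Your ``second delicate point'' (rescaling by $\Sigma_{\tau}^{-1}$ to bring the region of agreement back over the origin) concerns the location of the agreement along the flow direction, which is a different and less essential issue; the fixed offset $t$ between the two canonical strands is the quantity that must be shown to vanish, and your proposal never does so.
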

\begin{proof}
Suppose $x'$ is proximal with $x$. Extend $x$ and $x'$ to bi-infinite words in $\bar{X}$ - call these extensions $x$ and $x'$. There is $k_i\to\infty$ so that the $j$th coordinates
of $T^{k_i}(x)$ and $T^{k_i}(x')$ are the same for all $|j|\le i$.
Let $S_x,S_{x'}\in \mathcal{S}_{\tau}$  be as in Lemma \ref{S_x}. By Lemma \ref{finite differences}, there is a subsequence $k_{i_j}$ so that ${\bf v_{k_{i_j}}}-{\bf v'_{k_{i_j}}}\equiv {\bf v}$ is constant (here ${\bf v_k}, {\bf v'_k}$ denote the $k$th vertices of $S_x, S_{x'}$). Let $pr^u({\bf v_{k_{i_j}}})=t_j{\bf w}$ and $pr^u({\bf v})=t{\bf w}$. After passing to a subsequence, we may assume that $S_x-t_j{\bf w}\to S\in\mathcal{S}_{\tau}$ and
$S_{x'}-(t_i-t){\bf w}\to S'\in\mathcal{S}_{\tau}$. Then there is $y\in \bar{X}$ so that $S$ and $S'$ both follow pattern $y$ and have initial vertices on $E^s$. By the uniqueness in Lemma \ref{S_x}, these initial vertices, which differ by $pr^s({\bf v})$, must be the same. Thus, ${\bf v}=pr^u({\bf v})$, and $S_x\sim_pS_{x'}+{\bf v}$ in $\mathcal{S}_{\tau}$. By Lemma \ref{S_x}, there are only finitely many ${\bf v}$ that can arise this way, and by Lemma \ref{proximality is finite}, there are only finitely many strands in $\mathcal{S}_{\tau}$ proximal with $S_x$. Thus, the set $\{x'\in X:x' \text{ is proximal with }x\}$ is also finite.

For (2), we may take extensions of $x$ and $y$ in $\bar{X}$ that are $\tau$-periodic: say $\tau^m(x)=x$ and $\tau^m(y)=y$ with $m>0$. The strands $S_x$ and $S_y$ in $\mathcal{S}_{\tau}$ have initial vertices at the origin, are fixed by $\Sigma_{\tau}^m$, and, by the above, there is $t$ so that $S_x\sim_pS_y+t{\bf w}$. Then $S_x=\Sigma_{\tau}^{km}(S_x)\sim_p\Sigma_{\tau}^{km}(S_y+t{\bf w})=\Sigma_{\tau}^{km}(S_y)+\lambda^{km}t{\bf w}=S_y+\lambda^{km}t{\bf w}$. Since there are only finitely many strands proximal with $S_x$ (and $y$ is not $T$-periodic, again by \cite{HZ}), $t=0$. Thus $S_x\sim_p S_y$ and there is $k\in\N$ so that $S_x$ and $S_y$ not only have the same $k$th vertex, but also share their $k$th segment (Lemma \ref{proximality is finite}). That is, $x$ and $y$ are strongly coincident.
\end{proof}

We return to the proof that $ (4)\Longrightarrow  (1)$ in Theorem
\ref{SCCP2}. Let $x$ and $y$ be fixed points of $\tau$ and suppose
that for every prefix $u$ of $y$ the set $x\big|_u$ is a central
set. This means that for every prefix $u$ of $y$ the set
$x\big|_u$ belongs to some minimal idempotent $p_u \in \beta
\nats.$  The collection \[{\mathcal{P}}=\{p_u^*(x)|\,
u\,\,\mbox{is a prefix of}\,\,y\}\] consists of infinite words in
$X$ each proximal to $x.$  By (1) of Proposition~\ref{shift
proximality is finite}, the set ${\mathcal{P}}$ is finite.
Moreover since $p_u^*(x)\rightarrow y$ as $|u|\rightarrow +\infty$
(since $u$ is a prefix of $p_u^*(x)),$ it follows that $y\in
{\mathcal{P}}$ and hence $y$ is proximal to $x.$ Whence by (2) of
 Proposition~\ref{shift proximality is finite} we deduce that $x$ and $y$ are strongly coincident.
\end{proof}

\noindent We now recall the following result from \cite{BPZ}:

\begin{theorem}\label{same} Let $\omega \in {\mathcal A}^{\nats}$ be uniformly recurrent. Then the set
$\omega\big|_u$ is an IP-set if and only if it is a central set.
\end{theorem}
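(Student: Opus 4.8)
The plan is to prove only the nontrivial implication: if $\omega$ is uniformly recurrent and $\omega\big|_u$ is an IP-set, then $\omega\big|_u$ is central (the converse is immediate, since every central set is an IP-set). By Lemma~\ref{IP}, the hypothesis says $u$ is a prefix of $p^*(\omega)$ for some idempotent $p\in\beta\nats$, and we must produce a \emph{minimal} idempotent $q\in\beta\nats$ with $u$ a prefix of $q^*(\omega)$. The natural idea is to pass from $p$ to a minimal idempotent $q\le p$ (in the usual order on idempotents, i.e. $q+p=p+q=q$, which exists because $K(\beta\nats)$ meets the closure of any left ideal and every idempotent lies above a minimal one), and then to argue that $q^*(\omega)$ still begins with $u$. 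The key computational tool is Lemma~\ref{idempotent}: since $q+p=q$ and $p+q=q$, we get $q^*(\omega)=(p+q)^*(\omega)=q^*(p^*(\omega))$ and also $q^*(\omega)=(q+p)^*(\omega)=p^*(q^*(\omega))$.

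First I would set $X$ to be the shift-orbit closure of $\omega$; since $\omega$ is uniformly recurrent, $(X,T)$ is minimal, so every point of $X$ is uniformly recurrent and any two points of $X$ have the same language. Next, for \emph{any} $r\in\beta\nats$ the word $r^*(\omega)$ lies in $X$ (it is a $p$-$\lim$ of shifts of $\omega$, and $X$ is closed and shift-invariant). Now consider $z:=q^*(\omega)\in X$. Applying $p^*$ and using $p^*(q^*(\omega))=q^*(\omega)=z$ shows $z$ is a fixed point of $p^*$; applying the relation $q^*(p^*(\omega))=q^*(\omega)$ shows $z=q^*(p^*(\omega))$. The plan is to show $p^*(\omega)$ and $\omega$ are proximal in $X$ — equivalently, that $z$ and $p^*(\omega)$ are suitably related — so that $z$ agrees with $p^*(\omega)$ on an arbitrarily long prefix, in particular on $u$. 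Concretely: because $(X,T)$ is minimal, $p^*(\omega)$ is uniformly recurrent, and by Theorem~\ref{Berg} applied to the proximal pair $(\omega,p^*(\omega))$ — they are proximal since, $p$ being idempotent, $p^*(p^*(\omega))=p^*(\omega)$, and more directly one checks that $p^*(\omega)$ lies in the orbit closure of $\omega$ forcing proximality in a minimal system where... — one extracts the needed minimal idempotent. Let me restructure this as the cleaner route below.

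The cleaner route, which I expect to be the actual argument: take the idempotent $p$ with $u$ a prefix of $p^*(\omega)$, set $y:=p^*(\omega)\in X$. Since $X$ is minimal, $y$ is uniformly recurrent. I claim $\omega$ and $y$ are proximal: for every $N$ we must find $k$ with $(T^k\omega)_0\cdots(T^k\omega)_N=(T^ky)_0\cdots(T^ky)_N$, i.e. $\omega$ has an occurrence, at the same position, of the length-$(N{+}1)$ prefix of $y$; but that length-$(N{+}1)$ prefix $v$ of $y$ satisfies $\omega\big|_v\in p$ by definition of $p^*$, so in particular $\omega\big|_v\neq\emptyset$, giving such a $k$. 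Hence $\omega$ and $y$ are proximal with $y$ uniformly recurrent, so by Theorem~\ref{Berg} there is a \emph{minimal} idempotent $q\in\beta\nats$ with $q^*(\omega)=y$. Then by Lemma~\ref{IP}, since $u$ is a prefix of $y=q^*(\omega)$ and $q$ is a minimal idempotent, $\omega\big|_u$ is a central set, as desired. The main obstacle — and the only subtlety — is verifying the proximality of $\omega$ and $y=p^*(\omega)$; once that is in hand, Theorem~\ref{Berg} does all the work, and in fact this argument shows that in a uniformly recurrent word every idempotent can be replaced by a minimal one realizing the same limit word, which is exactly the content of the theorem.
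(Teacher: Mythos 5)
First, note that the paper does not actually prove Theorem~\ref{same}: it is quoted from \cite{BPZ}, so there is no internal proof to compare against. Your overall strategy is nevertheless the natural one and very likely the intended one: reduce to the implication ``IP $\Rightarrow$ central''; use Theorem~\ref{Hind} and Lemma~\ref{IP} to obtain an idempotent $p$ with $u$ a prefix of $y:=p^*(\omega)$; observe that $y$ lies in the orbit closure $X$ of $\omega$, which is minimal because $\omega$ is uniformly recurrent, so $y$ is uniformly recurrent; prove that $\omega$ and $y$ are proximal; and invoke Theorem~\ref{Berg} to replace $p$ by a \emph{minimal} idempotent $q$ with $q^*(\omega)=y$, after which Lemma~\ref{IP} finishes. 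All of this is correct except for one step.

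The gap is in your verification of proximality, and as written that step fails. Proximality of $\omega$ and $y$ requires, for each $N$, a single index $k$ with $\omega_k\cdots\omega_{k+N}=y_k\cdots y_{k+N}$. What you produce is some $k\in\omega\big|_v$ with $v=y_0\cdots y_N$, i.e.\ an index with $\omega_k\cdots\omega_{k+N}=y_0\cdots y_N$; this says nothing about $y_k\cdots y_{k+N}$, so $T^k(\omega)$ and $T^k(y)$ need not be close. Mere nonemptiness of $\omega\big|_v$ could not suffice in any case, since it holds for every factor $v$ of $y$ whenever $\omega$ and $y$ merely share a language, and two points of a minimal subshift need not be proximal. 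The repair is short and uses the idempotency of $p$ a second time, via Lemma~\ref{idempotent}: since $p^*(y)=p^*(p^*(\omega))=(p+p)^*(\omega)=p^*(\omega)=y$, Definition~\ref{p*} gives $y\big|_v\in p$ for every prefix $v$ of $y$. Hence $\omega\big|_v\cap y\big|_v\in p$, so this intersection is nonempty, and any $k$ in it is a \emph{common} occurrence, giving $\omega_k\cdots\omega_{k+N}=v=y_k\cdots y_{k+N}$ and thus $d(T^k(\omega),T^k(y))\le 2^{-(N+1)}$; letting $N\to\infty$ yields proximality. Your first paragraph gestures at the identity $p^*(p^*(\omega))=p^*(\omega)$ but never converts it into this intersection argument, and the ``minimal idempotent $q\le p$'' route sketched there does not obviously transfer membership of $\omega\big|_u$ from $p$ to $q$, so you were right to abandon it. With the correction above, the proof is complete.
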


\noindent Combining theorems~\ref{SCCP2} and \ref{same} we obtain

\begin{corollary} Let $\tau$ be an irreducible primitive substitution of Pisot type. Then for any pair of fixed points $x$ and $y$ of $\tau$ the following are equivalent, 
$x$ and $y$ are strongly coincident.
if and only if for any prefix $u$ of $y,$ the set $x\big|_u$ is an IP-set.
\end{corollary}

\section{Abstract numeration systems}

Here we present an alternative and constructive proof that for a general primitive substitution $\tau$  (not necessarily irreducible and of Pisot type) if two fixed points $x$ and $y$ of $\tau$ are strongly coincident, then for every prefix $u$ of $y$ the set $x\big|_u$ is a central set. We make use of the  Dumont-Thomas numeration systems defined by substitutions \cite{DT1,DT2}. Since in the irreducible Pisot case, condition  (4) alone implies the strong coincidence condition, this method of proof may provide a new insight to the strong coincidence conjecture. We begin with a brief review of these numerations systems.

Let $\tau$ denote a substitution on a finite alphabet $\mathcal{A}.$ For simplicity we assume that $\tau$ has at least one fixed point $x=x_0x_1x_2\ldots$ beginning in some letter $a\in \mathcal{A}.$
The idea behind the numeration system is quite natural: every coordinate $x_n$ of the fixed point $x$ is in the image of $\tau$ of some coordinate $x_m$ with $m\leq n.$ More precisely, consider the least positive integer $m$ such that $x_0x_1\ldots x_n$ is a prefix of $\tau(x_0x_1\ldots x_m).$  In this case we can write $x_0x_1\ldots x_n=\tau(x_0x_1\ldots x_{m-1})u_nx_n$ where $u_nx_n$ is a  prefix of $\tau(x_m).$ We now imagine a directed arc  from $x_m$ to $x_n$ labeled $u_n.$ In this way every coordinate $x_n$ is the target of exactly one arc, and the source of $|\tau(x_n)|$-many arcs. It follows that for each $n$ there is a unique path $s$ from $x_0$ to $x_n.$ Thus every natural number $n$ may be represented by a finite sequence of labels $u_i$ obtained by reading the labels along the path $s$ in the direction from $x_0$ to $x_n.$

More formally,  associated to $\tau$ is a directed graph  $\mathcal{G}(\tau)$ defined as follows: the vertex set of $\mathcal{G}(\tau)$ is the set $\mathcal{A}.$ Given any pair of vertices $a,b$ we draw a directed edge from $a$ to $b$ labeled $u\in \mathcal{A}^*$ if $ub$ is a prefix of $\tau(a).$
In other words, for every occurrence of $b$ in $\tau(a)$ there is a directed edge from $a$ to $b$ labeled by the prefix (possibly empty) of $\tau(a)$ preceding the given occurrence of $b.$ Figure 1 depicts the graph $\mathcal{G}(\tau)$ for the Fibonacci substitution $a\mapsto ab,$ $b\mapsto a.$

\begin{figure}[h]
  \begin{center}
    \unitlength=4pt
    \begin{picture}(25, 10)(0,-5)
      \gasset{Nw=5,Nh=5,Nmr=2.5,curvedepth=4}
      \thinlines
      \node[](A1)(0,0){$a$}
      \drawloop[loopangle=180](A1){$\varepsilon$}
      \node[](A2)(25,0){$b$}

      \drawedge(A1,A2){$a$}
      \drawedge(A2,A1){$\varepsilon$}
    \end{picture}
  \end{center}
  \caption{The Fibonacci automaton}
\end{figure}
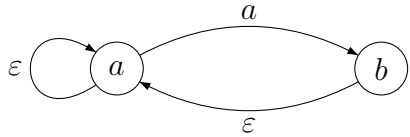\label{GF}

For simplicity, in case some letter $b$ occurs multiple times in $\tau(a),$ we draw just one directed edge from $a$ to $b$ having multiple labels as described above. This is shown in Figure 2 in the case of the substitution $a\mapsto aab,$ $b\mapsto bbaab.$

\begin{figure}[h]
  \begin{center}
    \unitlength=4pt
    \begin{picture}(25, 10)(0,-5)
      \gasset{Nw=5,Nh=5,Nmr=2.5,curvedepth=4}
      \thinlines
      \node[](A1)(0,0){$a$}
      \drawloop[loopangle=180](A1){$\varepsilon\,,a$}
      \node[](A2)(25,0){$b$}
      \drawloop[loopangle=0](A2){$\varepsilon\,,b\,,bbaa$}
      \drawedge(A1,A2){$aa$}
      \drawedge(A2,A1){$bb\,,bba$}
    \end{picture}
  \end{center}
  \caption{The automaton of $a\mapsto aab,$ $b\mapsto bbaab.$}
\end{figure}
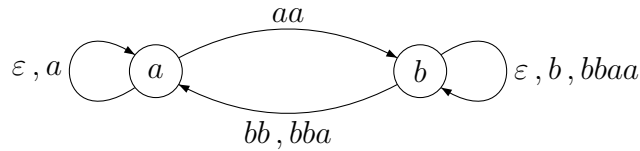\label{Gtau}

Let $x=x_0x_1x_2\ldots$ denote the fixed point of $\tau$ beginning in $a.$  Then the graph $\mathcal{G}(\tau)$ has a singleton loop based at $a$ labeled with the empty word $\varepsilon.$  We consider this to be the empty or $0$th path at $a.$ More generally by a path at $a\in \mathcal{A}$ we mean a finite sequence of edge labels $u_0u_1u_2\cdots u_n$ corresponding to a path in $\mathcal{G}(\tau)$ originating at vertex $a$ with the condition that $u_0\neq \varepsilon$ whenever the length of the path $n>0.$
For example in the case of the Fibonacci substitution,  except for the path $s=\varepsilon,$  each path is given by a word in $\{a, \varepsilon\}$ beginning in $a$ and not containing the factor $aa.$
For each path $s=u_0u_1u_2\cdots u_n$ set \[\rho(s)=\tau^n(u_0)\tau^{n-1}(u_1)\tau^{n-2}(u_2)\cdots \tau(u_{n-1})u_n\] and $\lambda(s)=|\rho(s)|.$
In \cite{DT1,DT2} it is shown that for each path
$s$ at $a,$ the word $\rho(s)$ is a prefix of the fixed point $x$ at $a$ and conversely for each prefix $u$ of $x$ there is a unique path $s$ at $a$ with $\rho(s)=u.$ This correspondence defines a numeration system in which every natural number $l$ is represented by the path $s=u_0u_1u_2\cdots u_n$  in $\mathcal{G}(\tau)$ from vertex $a$ to vertex $x_l$ corresponding to the prefix of length $l$ of $x,$ so that
\begin{equation*}(*)\,\,\,\,\,\,l= \lambda(s)= |\tau^n(u_0)|+|\tau^{n-1}(u_1)|+|\tau^{n-2}(u_2)|+\cdots +|\tau(u_{n-1})|+|u_n|.\end{equation*}

Generally by the numeration system one means the quantities $|\tau^n(u)|$ for all $n\geq 0$ and all proper prefixes $u$ of the images under $\tau$ of the letters of $\mathcal{A}.$ Then a {\it proper} representation of $l$ in this
numeration is an expression of the form (*) corresponding to a path  $s=u_0u_1u_2\cdots u_n$  in $\mathcal{G}(\tau).$

In the case of a uniform substitution  of length $k$ this corresponds to the usual base $k$-expansion of $l.$
In the case of the Fibonacci substitution, each $u_n\in \{\varepsilon, a\}$ and $u_iu_{i+1}\neq aa$ for each $0\leq i\leq n-1.$ Thus this representation of $l$ is the so-called Zeckendorff representation of $l$ in which $l$ is expressed as a sum of distinct Fibonacci numbers via the greedy algorithm (see \cite{Zeck}).

In general, this numeration system not only depends on the substitution $\tau$ but also on the choice of fixed point. For example for the substitution in Figure 2 the number $5$ is represented by the path $a,aa$ from vertex $a$ or by the path $b,\varepsilon$ from vertex $b.$
In fact, $\tau(a)aa=aabaa$ is the prefix of length $5$ of $\tau^\infty(a)$ while $\tau(b)\varepsilon=bbaab$ is the prefix of length $5$ of $\tau^\infty (b).$

An alternative reformulation is as follows:  Given two distinct paths $s=u_0u_1u_2\cdots u_n$ and $t=v_0v_1v_2\cdots v_m$ both starting from the same vertex $a,$ we write $s<t$ if either $n<m$ or if $n=m$  there exists $i \in \{0,1,\ldots ,n\}$ such that $u_j=v_j$ for $j<i,$ and $|u_i|<|v_i|.$ This defines a total order on the set of all paths starting from vertex $a.$ In the case of the Fibonacci substitution, we list the paths at $a$  in increasing order
\[\varepsilon, a, a\varepsilon, a\varepsilon\varepsilon, a\varepsilon a, a\varepsilon \varepsilon \varepsilon, a \varepsilon \varepsilon a, a \varepsilon a \varepsilon, a \varepsilon \varepsilon \varepsilon \varepsilon, \ldots\]
Thus there is an order preserving correspondence between $0,1,2,3,\ldots$ and the set of all paths at $a$ ordered in increasing order.

While these numeration systems are very natural and simple to define, they are typically extremely difficult to work with in terms of addition and multiplication.

Let $a$ and $b$ be distinct vertices in $\mathcal{G}(\tau).$ We say a path $s$ originating at $a$ is {\it synchronizing} relative to $b$ if there exists a path $s'$ originating at $b$ having the same terminal vertex as $s$ and with $\lambda(s)=\lambda(s').$ From this point of view the strong coincidence conjecture
implies that \[\{\lambda (s)\,|\,s=\mbox{a synchronizing path relative to}\, b\}\] is a thick set.\\

Let $\tau$ be a primitive substitution satisfying the strong coincidence condition. Suppose $x$ and $y$ are fixed points of $\tau$ beginning in $a$ and $b$ respectively.
We will show that $x\big|_u$ is a central set for every prefix $u$ of $y.$
Since $x$ and $y$ are strongly coincident, we can write
$x=scx',$ and $y=tcy'$ for some $s,t \in {\mathcal A}^+,$ $c\in {\mathcal A},$  and $x',y' \in {\mathcal A}^\infty$ with $s\sim_{\mbox{ab}} t.$ By replacing $\tau$ by a sufficiently large power of $\tau,$ we can assume that
\begin{itemize}
\item $sc$ is a prefix of $\tau(a),$
\item $tc$ is a prefix of $\tau(b),$
\item $b$ occurs in $\tau(c).$
\end{itemize}

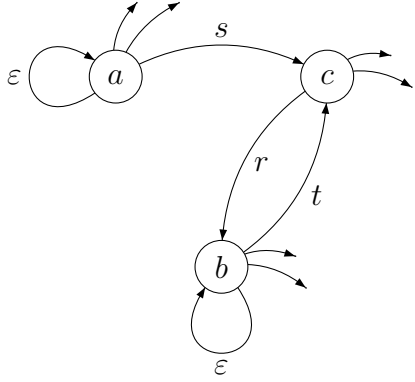
\begin{figure}[h]
  \begin{center}
    \unitlength=4pt
    \begin{picture}(15, 32)(0,-8)
    \gasset{Nw=5,Nh=5,Nmr=2.5,curvedepth=3}
    \thinlines
    \node[](A0)(10,0){$b$}
    \node[](A1)(0,18){$a$}
    \node[](A2)(20,18){$c$}
    \gasset{Nw=0,Nh=0,Nmr=0,curvedepth=3}
    \node[](A3)(26,20){$$}
    \node[](A4)(28,17){$$}
    \node[](A5)(2,25){$$}
    \node[](A6)(6,25){$$}
    \node[](A7)(17,1){$$}
    \node[](A8)(18,-2){$$}

    \drawloop[loopangle=270](A0){$\varepsilon$}
      \drawloop[loopangle=180](A1){$\varepsilon$}
    \gasset{curvedepth=-3}

    \gasset{curvedepth=+1}
    \drawedge(A2,A3){$$}
    \drawedge(A2,A4){$$}
     \drawedge(A1,A5){$$}
      \drawedge(A1,A6){$$}

      \drawedge(A0,A7){$$}
      \drawedge(A0,A8){$$}

    \gasset{curvedepth=+3}
    \drawedge(A1,A2){$s$}

 \gasset{curvedepth=-3}
 \drawedge(A2,A0){$r$}
    \drawedge[ELside=r](A0,A2){$t$}
    \end{picture}
  \end{center}
  \caption{Vertices $a,b,c$ of $\mathcal{G}(\tau)$}
\end{figure}\label{fig3}

 Thus in $\mathcal{G}(\tau)$ there is a directed edge from $a$ to $c$ labeled $s,$ a directed edge from $b$ to $c$ labeled $t,$ and a directed edge from $c$ to $b$ labeled $r$ for some prefix $r$ of $\tau(c).$ See Figure 3.

We now define a sequence of paths $(p_i)_{i\geq 0}$ from $a$ to $b$ by

 \[p_i=s,r, \underbrace{\varepsilon, \varepsilon, \ldots ,\varepsilon}_{2i}.\]
Put $n_i=\lambda (p_i).$ Then clearly
$\{n_i\,|\,i\geq 0\}\subseteq x\big|_b.$
We now show that any finite sum of distinct elements from the set $\{n_i\,|\,i\geq 0\}$ is contained
in $x\big|_b.$ Set
\[q_i=t,r, \underbrace{\varepsilon, \varepsilon, \ldots ,\varepsilon}_{2i}.\]
Then each $q_i$ is a path from $b$ to $b$ and since $s$ and $t$ are Abelian equivalent it follows that $\lambda(p_i)=\lambda(q_i).$
Fix $k\geq 1$ and choose $i_1< i_2<\cdots <i_k.$
Then
\begin{align*}\sum_{j=1}^k \lambda(p_{i_j})&= \lambda(p_{i_k}) +\sum_{j=1}^{k-1} \lambda(p_{i_j})\\&=\lambda(p_{i_k}) +\sum_{j=1}^{k-1} \lambda(q_{i_j})\\&=
|\tau^{2i_k+1}(s)| + |\tau^{2i_k}(r)| + \sum_{j=1}^{k-1} (|\tau^{2i_j+1}(t)|+|\tau^{2i_j}(r)|)\\&=
|\tau^{2i_k+1}(s)\tau^{2i_k}(r)\tau^{2i_{k-1}+1}(t)\tau^{2i_{k-1}}(r) \tau^{2i_{k-2}+1}(t)\tau^{2i_{k-2}}(r) \cdots \tau^{2i_{1}+1}(t)\tau^{2i_{1}}(r) |
\end{align*}
 which is represented by a path in $\mathcal{G}(\tau)$ from $a$ to $b$ and hence corresponds to an occurrence of $b$ in $x.$ This shows that $x\big|_b$ is an IP-set. It now follows from Theorem~\ref{same}  that $x\big|_b$ is a central set.
 A similar argument applies for any prefix $u$ of $y$ by defining the paths $p_i$ by
 \[p_i=s,r, \underbrace{\varepsilon, \varepsilon, \ldots ,\varepsilon}_{N_i}\]
 with $N_i$ sufficiently large.


\begin{thebibliography}{50}



\bibitem{BBK}
V. Baker, M. Barge and J. Kwapisz,
\newblock {\it Geometric realization and coincidence for reducible non-unimodular Pisot tiling spaces with an application to $\beta$-shifts,}
\newblock Num\'eration, pavages, substitutions,
\newblock  Ann. Inst. Fourier (Grenoble) {\bf 56} No. 7 (2006), p. 2213--2248.

\bibitem{BD}
M. Barge and B. Diamond,
\newblock {\it Coincidence for substitutions of Pisot type,}
\newblock Bull. Soc. Math. France {\bf 130} (2002), p. 619--626.

\bibitem{BD2}
M. Barge and B. Diamond,
\newblock {\it Proximality in Pisot tiling spaces,}
\newblock Fund. Math. {\bf 194} no. 3  (2007), p. 191--238.


\bibitem{BK1}
M. Barge and J. Kwapisz,
\newblock {\it  Geometric theory of unimodular Pisot substitutions,}
\newblock Amer. J. Math. {\bf 128} (2006), p. 1219--1282.

\bibitem{BK2}
M. Barge and J. Kwapisz,
\newblock {\it Elements of the theory of unimodular Pisot substitutions with an application to $\beta$-shifts,}
\newblock Algebraic and Topological Dynamics, 89--99, Contemp. Math. {\bf 385}, Amer. Math. Soc., Providence, RI, 2005.

\bibitem{BKel} M. Barge and J. Kellendonk,
\newblock{\it  Proximality and pure point spectrum for tiling dynamical systems,}
\newblock  2011 preprint,  arXiv 1108.4065.

\bibitem{VB2}
V. Bergelson,
\newblock  {\it Minimal idempotents and ergodic Ramsey theory,}
\newblock Topics in dynamics and ergodic theory, 8Ð 39,
\newblock  London Math. Soc. Lecture Note Ser., {\bf 310},
\newblock Cambridge Univ. Press, Cambridge, 2003.

\bibitem{BH}
V. Bergelson and  N. Hindman
\newblock{Nonmetrizable topological dynamics and Ramsey Theory}
\newblock Trans. Amer. Math. Soc. {\bf 320} (1990), p. 293--320.


\bibitem{BHS}
V. Bergelson, N. Hindman and D. Strauss,
\newblock  {\it Strongly central sets and sets of polynomial returns mod $1,$}
\newblock Proc. Amer. Math. Soc.,
\newblock to appear.

\bibitem{BergCom}
V. Bergelson and Y. Son
\newblock Personal communication.

\bibitem{Bl}
A. Blass,
\newblock {\it Ultrafilters: where topological dynamics = algebra = combinatorics,}
\newblock Topology Proc. {\bf 18} (1993), p. 33--56.

\bibitem{BHPZ}
M. Bucci, N. Hindman, S. Puzynina and L.Q. Zamboni
\newblock {\it On additive properties of sets defined by the Thue-Morse word}
\newblock preprint 2012.

\bibitem{BPZ}
M. Bucci, S. Puzynina and L.Q. Zamboni
\newblock {\it Central sets generated by uniformly recurrent words }
\newblock Preprint 2012.



\bibitem{DHS}
D. De, N. Hindman and D. Strauss,
\newblock {\it A New and Stronger Central Sets Theorem,}
\newblock Fundamenta Mathematicae {\bf 199} (2008), p. 155--175.


\bibitem{Dek}
F.M. Dekking
\newblock {\it The spectrum of dynamical systems arising from substitutions of constant length,}
\newblock Z. Wahrscheinlichkeitstheorie und Verw. Gebiete, {\bf 41} (1977/1978), p. 221--239.


\bibitem{DT1}
J.-M. Dumont and A. Thomas,
\newblock  {\it Syst\`emes de num\'eration et fonctions fractales relatifs
aux substitutions,}
\newblock Theoret. Comput. Sci., {\bf 65} (2) (1989), p. 153--169.

\bibitem{DT2}
J.-M. Dumont and A. Thomas,
\newblock  {\it Digital sum moments and substitutions,}
\newblock Acta Arith., {\bf 64}  (1993), p. 205--225.

\bibitem{Dw} 
\newblock S.\ Dworkin, 
\newblock\emph{Spectral theory and X-ray diffraction},
 \newblock J. Math. Phys.\textbf{ 34}
 (1993), 2965--2967.


\bibitem{E}
R. Ellis,
\newblock{ \it Distal transformation groups}
\newblock{Pac. J. Math.} {\bf 8} (1958), p. 401--405.

\bibitem{F}
H. Furstenberg,
\newblock {\it Recurrence in Ergodic Theory and Combinatorial Number Theory,}
\newblock Princeton University Press, 1981.

\bibitem{H}
N. Hindman,
\newblock {\it Finite sums of sequences within cells of a partition of $\nats,$}
\newblock J. Combinatorial Theory (Series A) {\bf 17} (1974), p. 1--11.

\bibitem{H2}
N. Hindman,
\newblock {\it Ultrafilters and Ramsey theory-an update}
\newblock Set theory and its Applications (J. Steprans \& S. Watson eds.)
\newblock Lecture Notes in Mathematics {\bf 1401}, Springer-Verlag, 1989, p. 97--118.

\bibitem{HLS}
N. Hindman, I. Leader and D. Strauss,
\newblock {\it Infinite partition regular matrices: solutions in central sets,}
\newblock Trans. Amer. Math. Soc. {\bf 355} (2003), p. 1213--1235.



\bibitem{HS}
N. Hindman and D. Strauss,
\newblock {\it Algebra in the Stone-\v Cech compactification. Theory and applications,}
\newblock de Gruyter Expositions in Mathematics {\bf 27}
\newblock Walter de Gruyter \& Co., Berlin, 1998.

\bibitem{HS2}
N. Hindman and D. Strauss,
\newblock {\it A simple characterization of sets satisfying the Central Sets Theorem,}
\newblock New York J. Math {\bf 15} (2009), p. 405--413.

\bibitem{HZ} C. Holton and L. Q. Zamboni,
\newblock {\it Geometric realizations of substitutions,}
\newblock  Bull. Soc. Math. France {\bf126} (1998), no. 2, p. 149-179.

\bibitem{LMS} 
\newblock J.-Y. Lee, R. Moody and B. Solomyak, 
\newblock\emph{Consequences of Pure Point Diffraction Spectra for Multiset Substitution Systems}, 
\newblock Discrete Comp. Geom. \textbf{29}(2003), 525--560.


\bibitem{M}  B.  Moss\'{e},
\newblock{\it Puissances de mots et reconnaissabilit\'{e}
des points fixes d'une substitution,}
\newblock   Theoretical Computer Science {\bf 99}
(1992), p. 327--334.



\bibitem{Zeck}
E. Zeckendorff,
\newblock {\it Repr\'esentation des nombres naturels par une somme de nombres de
Fibonacci ou de nombres de Lucas,}
\newblock  Bull. Soc. Royale Sci. Li\`ege,  {\bf 42} (1972), p.
179--182.
 \end{thebibliography}
\end{document}